\numberwithin{equation}{section}
\newtheorem{theorem}{Theorem}[section]
\newtheorem{lemma}[theorem]{Lemma}
\newtheorem{definition}[theorem]{Definition} 
\newtheorem{assumption}[theorem]{Assumption}
\theoremstyle{remark}
\newtheorem{remark}[theorem]{Remark}
\newcommand{\bke}[1]{\left( #1 \right)}
\newcommand{\bket}[1]{\left\{ #1 \right\}}
\newcommand{\norm}[1]{\| #1 \|}
\newcommand{\bka}[1]{\left\langle #1 \right\rangle}
\newcommand{\al}{\alpha}
\newcommand{\be}{\beta}
\newcommand{\e}{\epsilon}
\newcommand{\la}{\lambda}
\newcommand{\Om}{{\Omega}}
\newcommand{\si}{\sigma}
\newcommand{\De}{\Delta}
\newcommand{\Bp}{\dot B_{p,\infty}^{3/p-1}}
\newcommand{\Bq}{\dot B_{q,\infty}^{3/q-1}}
\newcommand{\bp}{\dot b_{p,\infty}^{3/p-1}}
\newcommand{\R}{{\mathbb R }}
\newcommand{\N}{{\mathbb N}}
\newcommand{\Z}{{\mathbb Z}}
\newcommand{\cR}{{\mathcal R}}
\newcommand{\nb}{{\nabla}}
\newcommand{\lec}{\lesssim}
\newcommand{\I}{\infty}
\newcommand{\supp}{\mathop{\mathrm{supp}}}
\newcommand{\donothing}[1]{{}}
\newcommand{\EQ}[1]{\begin{equation}\begin{split} #1 \end{split}\end{equation}}
\newcommand{\EQN}[1]{\begin{equation*}\begin{split} #1 \end{split}\end{equation*}}
\newcommand{\xRightarrow}[2][]{\ext@arrow 0359\Rightarrowfill@{#1}{#2}}
\begin{document}

\title{Discretely self-similar solutions to the Navier-Stokes equations {with Besov space data}} 
\author{Zachary Bradshaw and Tai-Peng Tsai}
\date{\today}
\maketitle 

\begin{abstract}{We construct self-similar solutions to the three dimensional Navier-Stokes equations for divergence free, self-similar initial data that can be large in the critical Besov space $\Bp$ where $3< p< 6$.  We also construct discretely self-similar solutions for divergence free initial data in $\Bp$ for $3<p<6$ that is discretely self-similar for some scaling factor $\la>1$.  These results extend those of \cite{BT1} which dealt with initial data in $L^3_w$ since $L^3_w\subsetneq \Bp$ for $p>3$.  We also provide several concrete examples of vector fields in the relevant function spaces. }
\end{abstract}


\section{Introduction}

The three dimensional Navier-Stokes equations (3D NSE) are
\begin{equation} 
\begin{array}{ll}\label{eq:NSE}
 \partial_t v -\Delta v +v\cdot\nabla v+\nabla \pi  = 0&
\\  \nabla\cdot v = 0&
\end{array}
\mbox{~in~}\R^3\times [0,\infty).
\end{equation}
The velocity field evolves from a given initial data $v_0:\R^3\to \R^3$.  In 1934, Leray constructed weak (i.e.~distributional) solutions for initial data in $L^2$ in \cite{leray} and proved a priori bounds for his solutions. He also observed that any solution to \eqref{eq:NSE} has a natural scaling: if $v$ satisfies \eqref{eq:NSE}, then for any $\lambda>0$
\begin{equation}
	v^{\lambda}(x,t)=\lambda v(\lambda x,\lambda^2t),
\end{equation}
is also a solution with pressure 
\begin{equation}
	\pi^{\lambda}(x,t)=\lambda^2 \pi(\lambda x,\lambda^2t),
\end{equation}
and initial data 
\begin{equation}
v_0^{\lambda}(x)=\lambda v_0(\lambda x).
\end{equation}
A solution is called self-similar (SS) if $v^\lambda(x,t)=v(x,t)$ for all $\lambda>0$ and is discretely self-similar with factor $\lambda$ (i.e.~$v$ is $\lambda$-DSS) if this scaling invariance holds for a given $\lambda>1$. Similarly, $v_0$ is self-similar (a.k.a.~$(-1)$-homogeneous) if $v_0(x)=\lambda v_0(\lambda x)$ for all $\lambda>0$ or $\lambda$-DSS if this holds for a given $\lambda>1$.  
These solutions can be either forward or backward if they are defined on $\R^3\times (0,\I)$ or $\R^3\times (-\I,0)$ respectively.  In this paper we work exclusively with forward solutions.

Self-similar solutions satisfy an ansatz for $v$ in terms of a time-independent profile $u$, namely, 
\begin{equation}\label{ansatz1}
v(x,t) = \frac 1 {\sqrt {t}}\,u\bigg(\frac x {\sqrt{t}}\bigg), 
\end{equation} 
where $u$ solves the \emph{Leray equations}
\begin{equation} 
\begin{array}{ll}\label{eq:stationaryLeray}
 -\Delta u-\frac 1 2 u-\frac 1 2 y\cdot\nabla u +u\cdot \nabla u +\nabla p = 0&
\\  \nabla\cdot u=0&
\end{array}
\mbox{~in~}\R^3,
\end{equation}
in the variable $y=x/\sqrt{ t}$.
Discretely self-similar solutions are determined by their behavior on the time interval $1\leq t\leq \lambda^2$ and satisfy the ansatz
\begin{equation}\label{ansatz2}
v(x,t)=\frac 1 {\sqrt{t}}\, u(y,s),
\end{equation}
where
\begin{equation}\label{variables}
y=\frac x {\sqrt{t}},\quad s=\log t.
\end{equation}
The vector field $u$ is $T$-periodic with period $T=2\log \lambda$ and solves the \emph{time-dependent Leray equations}
\begin{equation} 
\begin{array}{ll}
\label{eq:timeDependentLeray}
 \partial_s u-\Delta u-\frac 1 2 u-\frac 1 2 y\cdot\nabla u +u\cdot \nabla u +\nabla p = 0& 
\\  \nabla\cdot u = 0&
\end{array}
\mbox{~in~}\R^3\times \R.
\end{equation}
Note that the \emph{similarity transform} \eqref{ansatz2}--\eqref{variables} gives a one-to-one correspondence between solutions to \eqref{eq:NSE} and \eqref{eq:timeDependentLeray}. Moreover, when $v_0$ is SS or DSS, the initial condition $v|_{t=0}=v_0$ corresponds to a boundary condition for $u$ at spatial infinity, see \cite{KT-SSHS,BT1,BT2}.

{Self-similar and discretely self-similar solutions are important since they might shed light on questions about blow-up and uniqueness.  Indeed, backward self-similar solutions were first introduced by Leray in \cite{leray} as candidates for singular solution. Ne\v cas, {R\accent23 u\v {z}i\v {c}ka} and {\v Sver\'ak} ruled out this possibility in \cite{NRS}, but the existence of nontrivial backward DSS solutions remains open.  {Forward} self-similar and discretely self-similar solutions are important as they are compelling candidates for non-uniqueness \cite{JiaSverak} and other, more technical properties \cite{BT1}.  Proving the existence of such solutions is the first step to pursuing these questions further.}  

Until recently, self-similar solutions were known to exist only for small data in scaling invariant function spaces such as $L^3_w,\, \Bp$ ($p<\I$), or $BMO^{-1}$ \cite{GiMi,Kato,CP,Barraza,Koch-Tataru}. 
The first large-data solutions were constructed by Jia and \v Sver\'ak in \cite{JiaSverak} and required the initial data to be H\"older continuous away from the origin.  Tsai adapted the approach of Jia and \v Sver\'ak to the discretely self-similar case in \cite{Tsai-DSSI}, and, in collaboration with Korobkov with a contradiction argument, to the case of self-similar solutions on the half-space \cite{KT-SSHS}. 
These large-data existence results all require the initial data is continuous away from the origin.  Bradshaw and Tsai eliminated this assumption in \cite{BT1} giving a construction for any SS/DSS data in $L^3_w$.  {Bradshaw and Tsai also treated a more general problem on the whole and half spaces in \cite{BT2} where they constructed \emph{rotated} self-similar and discretely self-similar solutions.}

On the whole space, the solutions of \cite{BT1,BT2} are in the local Leray class, which is a generalization of Leray's weak solutions that replaces global quantities with local analogues.   Lemari\'e-Rieusset introduced local Leray solutions in \cite[Chapters 32 and 33]{LR} and offered a construction.  Kikuchi and Seregin gave a revised construction with more details in \cite{KiSe}. 
Note that $L^3_w$ embeds in $L^2_{u\,loc}$, making it a natural place to seek self-similar solutions.  The main results of \cite{BT1} are the following two theorems.

\begin{theorem}\label{thrm.old2} {\normalfont \cite[Theorem 1.3]{BT1}}
Let $v_0$ be a $(-1)$-homogeneous divergence free vector field in $\R^3$ which satisfies
\begin{equation}\label{ineq:decayingdata}
\|v_0\|_{L^3_w(\R^3)}\leq c_0,
\end{equation} 
for a possibly large constant $c_0$. Then, there exists a local Leray solution $v$ to \eqref{eq:NSE} which is self-similar and additionally satisfies
\begin{equation}\label{thrm.old-conv}
 \|  v(t)-e^{t\Delta}v_0 \|_{L^2(\R^3)}\leq C_0\,t^{1/4}
\end{equation}
for any $t\in (0,\infty)$ and a constant $C_0=C_0(v_0)$.
\end{theorem}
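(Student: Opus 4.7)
The plan is to follow the Jia--\v{S}ver\'ak perturbation strategy, adapted to the rough setting $v_0\in L^3_w$.  Decompose $v=U+u$ where $U(x,t):=e^{t\De}v_0$ is the caloric extension of the initial data; since $v_0$ is $(-1)$-homogeneous, $U$ is self-similar with profile $U_0(y):=e^\De v_0\in L^{3,\I}$, and the target estimate \eqref{thrm.old-conv} is, after similarity rescaling, equivalent to the stationary perturbation profile $w(y):=u(y,1)$ lying in $L^2(\R^3)$.  Since $U_0$ itself solves the linear Leray equation $-\De U_0-\tfrac12 U_0-\tfrac12 y\cdot\nb U_0=0$, subtracting this from the full Leray system for $V=U_0+w$ reduces the problem to producing a divergence free $L^2\cap H^1_{\loc}$ solution of the perturbed stationary equation
\[
-\De w-\tfrac12 w-\tfrac12 y\cdot\nb w+(w+U_0)\cdot\nb(w+U_0)+\nb p=0,
\]
which is the profile of a self-similar local Leray solution.

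The first step is to regularize $v_0$: truncate near the origin, mollify, and rehomogenize to obtain $(-1)$-homogeneous divergence free data $v_0^{(k)}$ that are continuous away from the origin, satisfy $\|v_0^{(k)}\|_{L^3_w}\lec c_0$ uniformly, and converge to $v_0$ in a locally strong sense.  Apply the Jia--\v{S}ver\'ak existence theorem \cite{JiaSverak} to each $v_0^{(k)}$ to produce a self-similar local Leray solution $v^{(k)}$ with smooth profile $V^{(k)}=U_0^{(k)}+w^{(k)}$, $w^{(k)}\in H^1$.  The aim is then to pass to the limit $k\to\I$.

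The main obstacle is obtaining a bound on $\|w^{(k)}\|_{L^2}$ depending only on $c_0$, not on the regularity of $v_0^{(k)}$ away from the origin (which degenerates as $k\to\I$).  The local Leray a priori estimate gives $\|v^{(k)}\|_{L^\I_t L^2_{\uloc}}\le C(c_0)$, and by self-similarity this is equivalent to $\|V^{(k)}\|_{L^2_{\uloc}}\le C(c_0)$, but upgrading to a global $L^2$ bound on $w^{(k)}$ requires quantitative decay at spatial infinity.  I would extract this by representing $w^{(k)}$ via the fundamental solution of the similarity operator $-\De-\tfrac12-\tfrac12 y\cdot\nb$ and exploiting the improved integrability of the source $\div(U_0^{(k)}\otimes U_0^{(k)})\in L^{3/2,\I}$ relative to each factor; the lower-order coupling terms $\div(w^{(k)}\otimes U_0^{(k)}+U_0^{(k)}\otimes w^{(k)})$ are then controlled by H\"older's inequality in Lorentz scales together with the embedding $\dot H^1\hookrightarrow L^{6,2}$, yielding after absorption and combination with the uniform locally $L^2$ control a uniform bound $\|w^{(k)}\|_{L^2}\le C(c_0)$.

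With the uniform $L^2$ bound in hand, the final step is standard:  local Leray compactness (the local energy inequality together with $L^3_{\loc}$ Aubin--Lions) produces a subsequential limit $v$ in the local Leray class; self-similarity is preserved because the rescaling $v\mapsto v^\la$ is continuous under this convergence; and the $L^2$ bound on $w^{(k)}$ transfers to the limit by weak lower semicontinuity, giving $w\in L^2$ and therefore \eqref{thrm.old-conv} with $C_0=C_0(c_0)$.
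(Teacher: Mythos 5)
There is a genuine gap, and it sits exactly at the step you flag as the ``main obstacle.'' Your plan hinges on a uniform bound $\|w^{(k)}\|_{L^2}\le C(c_0)$ obtained by representing $w^{(k)}$ through the fundamental solution of $-\De-\tfrac12-\tfrac12 y\cdot\nb$ and then ``absorbing'' the coupling terms $\div(w^{(k)}\otimes U_0^{(k)}+U_0^{(k)}\otimes w^{(k)})$. But these terms are \emph{linear} in $w^{(k)}$ with the \emph{large} coefficient $U_0^{(k)}$: the Lorentz--H\"older estimate you invoke gives
\[
\Big|\int (w\cdot\nb U_0)\cdot w\Big|=\Big|\int (w\cdot\nb w)\cdot U_0\Big|
\lec \|U_0\|_{L^{3,\infty}}\,\|w\|_{L^{6,2}}\,\|\nb w\|_{L^2}
\lec c_0\,\|w\|_{\dot H^1}^2,
\]
and since $\|U_0\|_{L^{3,\infty}}\sim\|v_0\|_{L^3_w}\le c_0$ is allowed to be large, this can never be absorbed into the coercive part $\|\nb w\|_{L^2}^2+\tfrac14\|w\|_{L^2}^2$ of the similarity-variable energy identity. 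Smallness of $c_0$ is precisely the regime where the theorem was already known (small data in $L^3_w$), so the absorption argument cannot close in the case the theorem is actually about. The quadratic source $\div(U_0\otimes U_0)$ is indeed harmless (it lies in $H^{-1}$ with norm $\lec c_0^2$); the obstruction is entirely in the large linear coupling, and your proposal offers no mechanism to handle it. Without the uniform $L^2$ bound, your compactness step can still produce a self-similar local Leray limit (this is essentially Lemari\'e-Rieusset's approximation argument mentioned in the introduction), but it loses exactly the distinguishing estimate \eqref{thrm.old-conv}.

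The proof in \cite{BT1}, whose machinery is reproduced in Sections \ref{sec.evolutionLeray} and \ref{sec.SS} of this paper for the perturbed system, circumvents this by never keeping the full $U_0$ as the background. Instead one replaces $U_0$ by the \emph{revised asymptotic profile} $W$ of Lemma \ref{lemma:W}: cut $U_0$ off outside a large ball $B_{R_0}$ and add a potential-theoretic correction $w$ restoring the divergence-free condition. Because $\|U_0\|_{L^q(\R^3\setminus B_R)}\to 0$ as $R\to\infty$ for $q>3$ (Assumption \ref{AU_0}), one can make $\|W\|_{L^\infty_s L^q}\le\al$ with $\al$ \emph{arbitrarily small} regardless of $c_0$, while $U_0-W\in L^2$ carries all the largeness. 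The linear coupling term then satisfies $|\int (U\cdot\nb W)\cdot U|\le\al\|U\|_{H^1}^2$ and is absorbable, the source $\mathcal R(W)$ lies in $H^{-1}$, and existence for the perturbed Leray system in the energy class follows from Galerkin approximation plus the Brouwer fixed point (compactness, not smallness, handles the large part). The estimate \eqref{thrm.old-conv} then falls out because $u-U_0=(u-W)+(W-U_0)\in L^\infty L^2$, which under the similarity scaling is exactly the $t^{1/4}$ bound. If you want to rescue an approximation scheme like yours, you would need this decomposition (or an equivalent device confining the largeness of $U_0$ to a compact region) before any uniform-in-$k$ energy bound can be proved.
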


\begin{theorem}\label{thrm.old} {\normalfont \cite[Theorem 1.2]{BT1} }
Let $v_0$ be a divergence free, $\lambda$-DSS vector field for some $\lambda >1$ and satisfy 
\eqref{ineq:decayingdata}
for a possibly large constant $c_0$. Then, there exists a local Leray solution $v$ to \eqref{eq:NSE} which is $\lambda$-DSS and additionally satisfies \eqref{thrm.old-conv}
for any $t\in (0,\infty)$ and a constant $C_0=C_0(v_0)$.
\end{theorem}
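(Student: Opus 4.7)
The plan is to exploit the fact that, via the similarity transform \eqref{ansatz2}--\eqref{variables}, a $\la$-DSS solution $v$ corresponds to a profile $u(y,s)$ that is $T$-periodic in $s$ with $T=2\log \la$ and solves the time-dependent Leray equations \eqref{eq:timeDependentLeray}. Thus the task reduces to constructing a $T$-periodic solution to \eqref{eq:timeDependentLeray} with the correct behavior at spatial infinity imposed by $v_0$. I would follow the Leray--Schauder scheme introduced in the DSS context by Tsai \cite{Tsai-DSSI} but adapted so that no continuity of $v_0$ away from the origin is needed---this being the essential new ingredient of \cite{BT1}.

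First I would split $v=U+w$ where $U(x,t)=e^{t\De}v_0$ is the caloric extension. Since $v_0\in L^3_w$ is $\la$-DSS, $U$ is $\la$-DSS and for $t>0$ inherits good estimates such as $|U(x,t)|\lec \|v_0\|_{L^3_w}(|x|+\sqrt t)^{-1}$ together with $L^q$-bounds for suitable $q$; it is also smooth for $t>0$ and solves the heat equation. The perturbation $w$ then satisfies
\EQN{
\pd_t w -\De w +(w+U)\cdot \nb w + w\cdot \nb U + \nb p_w = -U\cdot \nb U,\quad w|_{t=0}=0,
}
and we seek $w$ that is $\la$-DSS. Passing to similarity variables, $W(y,s)=\sqrt{t}\,w(x,t)$ is $T$-periodic in $s$ and solves a linear drift-diffusion equation of Leray type coupled to $W$ nonlinearly through terms involving $\bar U(y,s)=\sqrt t\,U(x,t)$.

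The Leray--Schauder step: on a Banach space $X$ of divergence-free $T$-periodic vector fields with suitable $L^2_{\uloc}$-type regularity in $y$, define the family $\Phi_\sigma:X\to X$ ($\sigma\in[0,1]$) sending $W$ to the unique $T$-periodic solution of the linear problem obtained by fixing $W$ in the nonlinear terms and scaling them by $\sigma$. Compactness of $\Phi_\sigma$ follows from parabolic regularization (which upgrades the drift-diffusion solution in one period), while $\Phi_0=0$. The main obstacle is the uniform a priori bound: one must show that any $W=\Phi_\sigma(W)$ with $\sigma\in[0,1]$ satisfies $\|W\|_X\le C(v_0)$. The natural tool is the local energy inequality, integrated over one period $[s_0,s_0+T]$ to exploit periodicity in place of an initial datum, combined with the pointwise/integrability bounds on $\bar U$ coming from $v_0\in L^3_w$. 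This is exactly where the absence of continuity of $v_0$ must be absorbed by the weak-$L^3$ control of $U$.

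Once the fixed point $W$ is produced, I would translate back to obtain $w$ and set $v=U+w$. The periodicity in $s$ together with the $L^2_{\uloc}$-type bound on $W$ transfers to the desired $L^2_{\uloc}$-control on $v$, placing $v$ in the local Leray class and giving $v$ the $\la$-DSS property by construction. Finally, \eqref{thrm.old-conv} is obtained by rewriting $v-e^{t\De}v_0=w$ in original variables and using $\|w(t)\|_{L^2(\R^3)}=\sqrt t\,\|W(\cdot,s)\|_{L^2(\R^3)}\cdot (\text{time rescaling factor})$; the $T$-periodic $X$-bound on $W$ yields the $t^{1/4}$ factor after the Jacobian of the similarity change of variables, concluding the proof.
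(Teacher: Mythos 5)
Your proposal takes the Leray--Schauder route of Jia--\v Sver\'ak and Tsai \cite{Tsai-DSSI}, but that is precisely the method that \cite{BT1} (whose Theorem 1.2 this is) had to abandon. The crux of any Leray--Schauder argument is the a priori bound on \emph{all} fixed points of $\Phi_\sigma$, $\sigma\in[0,1]$, in the topology where $\Phi_\sigma$ is compact; in \cite{JiaSverak,Tsai-DSSI} that bound is produced by local H\"older and pointwise-decay estimates near $t=0$, and these are exactly the estimates that require $v_0$ to be (H\"older) continuous away from the origin. Writing that ``the absence of continuity of $v_0$ must be absorbed by the weak-$L^3$ control of $U$'' names the obstruction but does not resolve it: no mechanism is offered, and none is known --- this is the very reason the paper's scheme is different. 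The machinery actually used (Theorem \ref{thrm:existenceOnR3}, specialized to $B=0$, via Lemma \ref{th:2.1}) constructs the periodic profile by a Galerkin method in the \emph{global} energy class: periodicity is imposed at the level of the finite-dimensional ODE system \eqref{eq:ODE} by applying Brouwer's fixed point theorem to the time-$T$ map (a Gronwall estimate shows a ball $B^k_\rho$ is invariant), and only energy estimates --- no pointwise regularity of the data --- are needed to pass to the limit.

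There are two further gaps. First, your splitting $v=e^{t\De}v_0+w$ does not close energetically: testing your equation for $w$ with $w$ produces the term $\int (w\cdot\nb U)\cdot w=-\int (w\cdot\nb w)\cdot U$, which can be absorbed into the dissipation only if $U$ is \emph{small} in a scale-critical norm; here $\norm{v_0}_{L^3_w}\le c_0$ is large, so it is not, and the same failure occurs for the profile $W'(y,s)=\sqrt{t}\,w(x,t)$ in similarity variables. The paper's fix is Lemma \ref{lemma:W}: replace $U_0$ by $W=\xi U_0+w$, where $\xi$ cuts off a large ball $B_{R_0}$ and $w$ is a nonlocal correction restoring $\nabla\cdot W=0$; since Assumption \ref{AU_0} makes the far-field $L^q$ norm ($q>3$) of $U_0$ as small as desired for $R_0$ large, one gets $\abs{\int (f\cdot\nb W)\cdot f}\le \al \norm{f}_{H^1}^2$ with $\al$ small, and everything discarded is dumped into the source $\mathcal R(W)$, which is controlled in $H^{-1}$. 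Your ansatz skips this device entirely. Second, your function space is inconsistent with the conclusion: compactness of $\Phi_\sigma$ in an $L^2_{\uloc}$-type space over the unbounded domain $\R^3$ fails (no tightness at spatial infinity), while the conclusion \eqref{thrm.old-conv} is a \emph{global} $L^2$ statement --- indeed $\norm{v(t)-e^{t\De}v_0}_{L^2(\R^3)}=t^{1/4}\norm{W'(\cdot,s)}_{L^2(\R^3)}$ by scaling --- so it requires $W'(\cdot,s)\in L^2(\R^3)$, i.e.\ exactly the global energy class in which the paper works; a uloc bound yields nothing like \eqref{thrm.old-conv}.
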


In his 2016 book \cite{LR2}, Lemari\'e-Rieusset provides a slightly more general result in the self-similar case by extending the Leray-Schauder approach of Jia and Sverak.  In particular, Lemari\'e-Rieusset first shows that any self-similar initial data in $L^\I(S^2)$ where $S^2$ denotes the unit sphere gives rise to a self-similar local Leray solution.  
He then shows that any self-similar initial data in $L^2_{u\,loc}$ can be approximated by self-similar initial data in $L^\I(S^2)$. Since all local Leray solutions satisfy an a priori bound, the constructed local Leray solutions can be used to approximate a self-similar solution for any data in $L^2_{u\,loc}$.  {We anticipate a similar argument can be made for discretely self-similar data and solutions generalizing Theorem \ref{thrm.old} to a larger class of initial data, and intend to elaborate on this in future research.}  Note that Chae and Wolf recently released a pre-print \cite{Chae-Wolf} which constructs solutions for DSS data in $L^2_{loc}(\R^3)$ via a different approach.

In this paper we generalize Theorems \ref{thrm.old2} and \ref{thrm.old} to cover self-similar and discretely self-similar data in the critical Besov spaces $\Bp$ where $3<  p< 6$, for any scaling factor $\la>1$.
In comparison to other well known spaces we have the following strict embeddings for $3<p<\infty$,
\[L^3_w\subset \Bp \subset BMO^{-1}\subset \dot B_{\infty,\infty}^{-1}.\]
{Note $ \Bq \subset \Bp$ if $3 \le q < p < \infty$.}
If $p=3$ then $L^3_w$ and $\Bp$ are not directly comparable.  

The following theorems are the main results of this paper.

\begin{theorem}\label{theorem.main2}Fix $p\in (3,6)$.  Assume $v_0:\R^3\to \R^3$ is divergence free, belongs to $\Bp$, and is self-similar. Then there exists a self-similar distributional solution $v$ and pressure distribution $\pi$ to 3D NSE on $\R^3\times (0,\infty)$.  
Furthermore, $v$ and $v_0$ can be decomposed as $a+b$ and $a_0+b_0$ respectively so that $a_0\in L^3_w$, $b_0$ is small in $\Bp$,  {
\EQ{\label{th1.3-1}
\|a(t)-e^{t\Delta}a_0\|_{L^2} &\leq C_2 t^{1/4},\\
\int_{0}^t \norm{a(\tau)-e^{\tau\Delta}a_0}_{L^r}^q d\tau  &\le C_r t^{q/4},\quad \forall\, r \in (2,6],
}
for some constant $C_r(v_0)$ with $\frac 3r + \frac 2q=\frac 32$, and  
\begin{equation}\label{th1.3-2}
\norm{b(t)-e^{t\Delta}b_0}_{L^r} \le C_r \norm{b_0}_{\Bp}^2 t^{-\frac 12 + \frac 3{2r}}, \quad \forall\, r \in 
\left[\frac p2, \frac {3p}{6-p}\right),
\end{equation}
for some constant $C_r$. }
Also, $a$ and $b$ are self-similar.
\end{theorem}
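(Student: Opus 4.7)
The plan is to decompose $v_0 = a_0 + b_0$ with $a_0$ self-similar in $L^3_w$ and $b_0$ self-similar and small in $\Bp$, then combine Theorem \ref{thrm.old2} (for $a_0$) with a Kato-type small-data theory (for $b_0$) via a perturbative construction. For the decomposition, self-similarity gives $v_0(x) = |x|^{-1}F(x/|x|)$ for an angular profile $F$, and $v_0 \in \Bp$ corresponds to an integrability condition on $F$ over $S^2$. Truncating $F$ by its level sets produces $F = F_1 + F_2$ with $F_1$ bounded (so $|x|^{-1}F_1(x/|x|) \in L^3_w$) and the $F_2$-piece of arbitrarily small $\Bp$ norm. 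Applying the Leray projection, which commutes with dilations, preserves $(-1)$-homogeneity and restores the divergence-free condition, yielding the desired $a_0$ and $b_0$.

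Second, for $b_0$ small in $\Bp$ with $3<p<\infty$, construct a global mild solution $b$ by Picard iteration in a scaling-invariant Kato-type space following \cite{Kato,CP}; self-similarity of $b$ is inherited by uniqueness in the small-data class, and the estimate \eqref{th1.3-2} comes from the standard Duhamel/bilinear bounds between $\Bp$ and time-weighted $L^r$ spaces, where the admissible range $[p/2, 3p/(6-p))$ is nonempty exactly when $p<6$.

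Third, set $a := v - b$ and solve the perturbed system
\EQN{
\pd_t a - \De a + (a+b)\cdot\nb a + a\cdot\nb b + \nb \pi_a = 0, \quad \nb \cdot a = 0, \quad a|_{t=0}=a_0,
}
in the local Leray class with $a$ self-similar, adapting the construction of \cite{BT1}: approximate $a_0$ by smooth self-similar data, solve truncated perturbed problems to produce approximate solutions, derive a perturbed local energy inequality, and pass to the limit via compactness. Self-similarity of $a$ is enforced by working at the level of the profile through the stationary Leray equation \eqref{eq:stationaryLeray}, modified by the self-similar profile of $b$ entering as a known drift, so that the Leray--Schauder apparatus of \cite{JiaSverak,BT1} carries over. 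The main obstacle is controlling the new cross terms $a\cdot\nb b$ and $b\cdot\nb a$ in the local energy inequality; the smallness of $\|b_0\|_{\Bp}$ combined with the time-weighted $L^r$ bounds for $b$ from step two renders these terms subcritical and absorbable into the standard local Leray machinery, which is precisely what closes the estimates \eqref{th1.3-1}. Summing $v = a + b$ then yields the required self-similar distributional solution with the claimed bounds.
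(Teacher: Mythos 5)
Your overall architecture (decompose $v_0=a_0+b_0$, small-data mild solution for $b_0$, perturbed Leray system for $a_0$ with $b$ as a known drift) matches the paper's, and your steps two and three are essentially sound modulo bookkeeping. But your decomposition step has a genuine gap. You propose to write $v_0(x)=|x|^{-1}F(x/|x|)$ and truncate $F$ by its level sets, putting the bounded part $F_1$ into $L^3_w$ and making the unbounded tail $F_2$ small in $\Bp$. This presumes that the angular profile $F$ is a measurable function with enough integrability for level-set truncation to make sense and for the tail to be small in norm. That is false for general $v_0\in\Bp$ with $p>3$: by Cannone's lemma (which is exactly the paper's Lemma \ref{lemma.cannone}), membership $v_0\in\Bp$ is equivalent to $v_0|_{S^2}\in B^{3/p-1}_{p,p}(S^2)$, and since $3/p-1<0$ this is a space of genuine distributions of negative regularity --- the profile need not be a function at all, let alone one admitting level-set truncation (the paper's Lemma \ref{lemma.notsquareintegrable} exhibits data in $\Bp\setminus L^2_{\mathrm{loc}}$, and the genuinely new data covered by the theorem are precisely those whose spherical trace is not in $L^\infty(S^2)$). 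The paper's fix is to truncate in \emph{frequency} rather than amplitude: take the Littlewood--Paley decomposition of $v_0|_{S^2}$ on the sphere, let $b_0$ be the high-frequency tail $\sum_{j\ge J}$ (small in $B^{3/p-1}_{p,p}(S^2)$, hence small in $\Bp$ after $(-1)$-homogeneous extension, by the quantitative form \eqref{ineq.cannone} of Cannone's lemma), and let $a_0$ be the remaining finite sum of smooth blocks, which is in $L^\infty(S^2)$ and hence extends to an $L^3_w$ field. This frequency splitting is the key lemma (Lemma \ref{lemma.profileslicing2}) your proposal is missing, and without it the argument does not cover the stated hypothesis.

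Two secondary inaccuracies. First, the paper deliberately does \emph{not} run the local Leray machinery you invoke for $a$: it works at the level of the self-similar profile, writes $A=U+W$ with $W$ the revised asymptotic profile of Lemma \ref{lemma:W}, and produces $U\in H^1(\R^3)$ by a stationary Galerkin scheme closed with Brouwer's fixed point theorem (not the Leray--Schauder apparatus; \cite{BT1} is also Galerkin-based). It is this global energy-class bound for the profile difference, transported back through the similarity variables, that yields \eqref{th1.3-1}; a local energy inequality alone would not give the global $L^2$ estimate. Second, the restriction $p<6$ does not come from nonemptiness of the range $[p/2,3p/(6-p))$ in \eqref{th1.3-2}; it enters through the source-term estimate \eqref{ineq.sourcetermbound}, where one needs $L^3_w\subset L^2+L^{2p/(p-2)}$, i.e.\ $3<2p/(p-2)$, to control the cross terms $W\cdot\nabla B+B\cdot\nabla W$ in the energy estimate for the profile.
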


\begin{theorem}\label{theorem.main}Fix $p\in (3,6)$.  Assume $v_0:\R^3\to \R^3$ is divergence free, belongs to $\Bp$, and is $\la$-DSS for some $\la>1$. Then, there exists a $\la$-DSS distributional solution $v$ and pressure distribution $\pi$ to 3D NSE on $\R^3\times (0,\infty)$.  
Furthermore, $v$ and $v_0$ can be decomposed as $a+b$ and $a_0+b_0$ respectively so that $a_0\in L^3_w$, {$b_0$ is small in $\Bp$, $a(t)-e^{t\Delta}a_0$ satisfies \eqref{th1.3-1}, 
$b(t)-e^{t\Delta}b_0$ satisfies \eqref{th1.3-2}, and  }
$a$ and $b$ are $\la$-DSS. 
\end{theorem}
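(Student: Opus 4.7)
The plan is to decompose the initial data $v_0 = a_0 + b_0$ into a pair of divergence-free $\la$-DSS vector fields with $a_0 \in L^3_w$ (but possibly large) and $\|b_0\|_{\Bp}$ arbitrarily small, then construct $a$ from the prior result Theorem \ref{thrm.old} and $b$ perturbatively around $a$ using critical-data smallness. The sum $v = a+b$ will be the desired $\la$-DSS distributional solution, with $a$ carrying the $L^3_w$ bulk and $b$ a Kato-class perturbation. Theorem \ref{theorem.main2} is the self-similar analogue and should be proved by the same scheme with time-independent profiles replacing periodic ones.

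The main obstacle is the splitting step. Given $v_0\in\Bp$ with $3<p<6$, divergence-free and $\la$-DSS, one must produce $a_0\in L^3_w$ and $b_0$ with arbitrarily small $\Bp$-norm, each divergence-free and $\la$-DSS. A natural tactic is to truncate $v_0$ in a way compatible with the $\la$-DSS scaling, for instance by a Littlewood--Paley type projection $P_{\le N}$ adapted to the discrete scaling (or, equivalently, a mollification that commutes with the DSS action up to controlled scaling errors), and then to apply the Leray projector to restore the divergence-free condition. The hypothesis $p>3$ is what gives room to extract an $L^3_w$ component, since $L^3_w$ is critically scaled but strictly smaller than $\Bp$ for $p>3$, while $p<6$ will be dictated by the bilinear estimates of the perturbation step.

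With the splitting in hand, Theorem \ref{thrm.old} provides a $\la$-DSS local Leray solution $a$ with data $a_0$ satisfying $\|a(t)-e^{t\Delta}a_0\|_{L^2}\lesssim t^{1/4}$. Interpolating this $L^2$ bound against the $\dot H^1$ bound built into local Leray solutions via Gagliardo--Nirenberg yields the $L^q_t L^r_x$ estimate in \eqref{th1.3-1} for $r\in(2,6]$ with scaling exponent $\tfrac{3}{r}+\tfrac{2}{q}=\tfrac{3}{2}$.

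Finally, construct $b$ as a $\la$-DSS mild solution of the perturbation equation around $a$,
\[
b(t) = e^{t\Delta}b_0 - \int_0^t e^{(t-\tau)\Delta}\mathbb{P}\,\nabla\cdot\bigl(a\otimes b + b\otimes a + b\otimes b\bigr)(\tau)\,d\tau,
\]
by a Picard contraction in a $\la$-DSS Kato-type space with norm of the form $\sup_{t>0} t^{1/2-3/(2r)}\|\cdot\|_{L^r}$ for some $r$ in the range of \eqref{th1.3-2}. The $b\otimes b$ term is controlled by the standard Cannone--Kato bilinear estimate using smallness of $\|b_0\|_{\Bp}$. The cross-terms involving $a$ are the delicate part: decomposing $a = e^{t\Delta}a_0 + (a - e^{t\Delta}a_0)$, the linear heat flow lies in the required Kato space by the embedding $L^3_w\subset \Bp$ and classical heat-kernel estimates, while the nonlinear remainder is controlled through \eqref{th1.3-1}. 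The output $b$ is $\la$-DSS by uniqueness of the fixed point under the DSS symmetry, and \eqref{th1.3-2} follows directly from the contraction estimate. Consequently $v=a+b$ is $\la$-DSS, divergence-free, and a distributional solution of \eqref{eq:NSE} with the claimed estimates.
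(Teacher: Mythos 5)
Your decomposition step is essentially the paper's (Lemma \ref{lemma.profileslicing} does exactly the $\la$-adic Littlewood--Paley splitting plus Helmholtz projection you sketch), but the rest of your plan inverts the paper's order of construction in a way that breaks: you build the \emph{large} piece $a$ first, as a plain-NSE solution via Theorem \ref{thrm.old}, and then try to produce the \emph{small} piece $b$ by Picard contraction for the equation perturbed by $a$. The cross terms $a\otimes b+b\otimes a$ are \emph{linear in $b$ with coefficient $a$}, so the contraction requires the operator $b\mapsto \int_0^t e^{(t-\tau)\Delta}\mathbb{P}\nabla\cdot(a\otimes b+b\otimes a)\,d\tau$ to have small norm on your Kato space; that norm is comparable to a scale-critical norm of $a$, which is \emph{large} by design ($a_0$ carries the bulk of the data). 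Your proposed fix does not help: $e^{t\Delta}a_0$ has Kato norm comparable to $\|a_0\|_{L^3_w}$, which is large, and the remainder bound \eqref{th1.3-1} carries a large constant $C(v_0)$. Nor can you retreat to short times to gain smallness: since $a$ is $\la$-DSS, its Kato norm on $(0,T)$ is independent of $T$ by scaling, so the usual ``small time interval'' Gronwall device is unavailable. This is a genuine obstruction, not a technicality. Moreover, one cannot instead treat $b$ by compactness/energy methods, because $b_0\in\Bp$ need not even lie in $L^2_{\mathrm{loc}}$ (Lemma \ref{lemma.notsquareintegrable}), so there is no energy framework for the Besov piece at all.

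The paper's proof is arranged precisely to avoid this. First, $b$ is constructed as the small-data mild solution of the \emph{unperturbed} NSE with data $b_0$ (via \cite[Theorem 5.27]{BCD}); it is $\la$-DSS by uniqueness of small solutions, and \eqref{th1.3-2} is the standard bilinear estimate for this solution. Then the \emph{large} piece $a$ is required to solve the $b$-perturbed equation \eqref{eq.a}; in similarity variables this is the time-periodic perturbed Leray system \eqref{eq:wholeSpaceLeray}, which is solved by a Galerkin scheme plus Brouwer fixed point (Theorem \ref{thrm:existenceOnR3}). Energy methods tolerate large data; what they need is smallness of the \emph{coefficients} in the perturbation, and that is exactly what the first step supplies ($\|B\|_{L^\infty}<\frac{1}{24}$ after choosing $\epsilon_0$ small). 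This also explains why Theorem \ref{thrm.old} cannot be quoted as a black box, contrary to your plan: the $L^3_w$ piece must solve a perturbed equation, so Section \ref{sec.evolutionLeray} redoes the construction of \cite{BT1} with the extra terms $B\cdot\nabla u+u\cdot\nabla B$ (this is also where the restriction $p<6$ enters, through the bound \eqref{ineq.sourcetermbound}); the estimate \eqref{th1.3-1} then comes from the energy class of $u-U_0$ in similarity variables, transferred by DSS scaling, rather than from any ``$\dot H^1$ bound built into local Leray solutions.'' In short: small piece by contraction for the plain equation, large piece by energy methods for the perturbed equation --- your assignment of methods to pieces is the reverse, and neither half of it can be carried out.
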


\noindent Comments on Theorems \ref{theorem.main2} and \ref{theorem.main}:

\begin{enumerate} 
\item If $p>3$, then there exist discretely self-similar functions in $\Bp\setminus L^3_w$, a fact we prove in Lemma \ref{lemma.strictembedding}.  
\item {The estimate \eqref{th1.3-1} is because $a(t)-e^{t\Delta}a_0$ is in the energy class in similarity variables. The estimate \eqref{th1.3-2} is a usual bilinear estimate for mild solutions. Combining both we have, 
for all {$r \in [\frac p2, 3)$}, $\frac 3r + \frac 2q=\frac 32$, 
\EQ{
\bke{\frac 1t \int_{0}^t \norm{v(\tau) - e^{\tau \De}v_0}_{L^r}^q d\tau}^{1/q} \le 
C   t^{-\frac 12 + \frac 3{2r}}.
}
Note the exponent on the right side is positive {for $r\in [\frac p2, 3)$}.
It shows that $v(t)$ converges to $e^{t\De}v_0$ as $t \to 0$ in some weak time-average sense, in a way that is independent of the decomposition $v_0=a_0+b_0$.}
\item In contrast to Theorems \ref{thrm.old2} and \ref{thrm.old}, we do not seek local Leray solutions since we do not have the embedding $\Bp \subset L^2_{\mathrm{loc}}$ for $p\geq 3$. Indeed, it is possible to show that there exist $2$-DSS initial data in $\Bp\setminus L^2_{\mathrm{loc}}$ -- see Lemma \ref{lemma.notsquareintegrable}.  This also ensures that our result is new in comparison to \cite[Theorem 16.3]{LR2} {and \cite{Chae-Wolf}.}   
\item In \cite{BT2} we proved the existence of solutions which were rotated self-similar and rotated discretely self-similar and had data in $L^3_w$.  The class of rotated discretely self-similar solutions includes but is larger than the DSS class.  Such solutions have an ansatz which satisfies a system resembling the stationary and time-periodic Leray equations and it is expected that, on the whole space, the arguments in this paper can be applied to construct rotated SS and rotated DSS solutions with data in $\Bp$ ($3<p<6$), but we do not include the details presently. 
\end{enumerate}

We prove Theorems \ref{theorem.main2} and \ref{theorem.main} similarly.  The idea is to decompose the initial data $v_0$ as $v_0=a_0+b_0$ where $a_0$ is large in $L^3_w$ and $b_0$ is small in $\Bp$. In the DSS case we use the Littlewood-Paley decomposition of $v_0$ (see Lemma \ref{lemma.profileslicing}) while in the self-similar case we use a lemma due to Cannone \cite[Proposition 23.1]{LR}.  {The small data $b_0$ gives rise to a SS/$\la$-DSS mild solution $b$ in the Kato space 
\EQ{
K_p  = \bket{ u \in C((0,\infty);L^p)\ : \ \norm{u}_{K_p} = \sup_{0<t<\infty} t^{\frac 12 - \frac 3{2p}}\norm{u(t)}_{L^p}},
}
see \cite[Theorem 5.27]{BCD}.}
We then construct a SS/$\la$-DSS solution $a$ to a perturbed problem by extending the arguments in \cite{BT1}.

Our approach breaks down for $p\geq 6$.  Basically, (small) strong solutions in $\Bp$ for $p\geq 6$ do not decay rapidly enough as $|x|\to \I$ for us to get \emph{a priori} bounds for solutions to the time-periodic, perturbed Leray equations in the energy class -- see inequality \eqref{ineq.sourcetermbound}.  It is conceivable that our general approach can be used for data in $\Bp$ for any $p\in (3,\I)$ if we work in a class larger than the energy class. But constructing time-periodic solutions in such a context has not been done, even for the Navier-Stokes equations.  The expansion $v_0=a_0+b_0$ fails in $BMO^{-1}$ because $BMO^{-1}$ is an $L^\I$ based space.  Consequently, we don't expect the arguments in this paper to extend to the case of self-similar or discretely self-similar data in $BMO^{-1}$.

This paper is arranged as follows.  In Section \ref{sec.technical} we study discrete self-similarity in Besov spaces and give the main technical lemma.  In Section \ref{sec.evolutionLeray} we prove the existence of solutions to a time periodic, 
 perturbed Leray equation.  Section \ref{sec.DSS} contains the proof of Theorem \ref{theorem.main} which depends on Sections \ref{sec.technical} and \ref{sec.evolutionLeray}. The self-similar case is covered in Section \ref{sec.SS}.  In Section \ref{Appendix} we analyze the relationships between the collections of DSS vector fields in various function spaces, for example we show $L^2_{loc}\cap $\,DSS and $\Bp\cap$\,DSS are not comparable.

\section{Discrete self-similarity in critical Besov spaces}\label{sec.technical}

We first recall the Littlewood-Paley characterization of Besov spaces. Fix an inverse length scale $\la>1$.  
Let $B_r$ denote the ball of radius $r$ centered at the origin in $\R^3$.  Fix a non-negative, radial cut-off function $\chi\in C_0^\infty(B_{1})$ so that $\chi(\xi)=1$ for all $\xi\in B_{1/\la}$. Let $\phi(\xi)=\chi(\lambda^{-1}\xi)-\chi(\xi)$ and $\phi_j(\xi)=\phi(\lambda^{-j}\xi)$.  For a vector field $u$ of tempered distribution, let $\Delta_j u=(\mathcal F^{-1}\phi_j)*u$ for $j\in \N_0 $ and $\Delta_{-1}=(\mathcal F^{-1}\chi)*u$. Then, $u$ can be written as\[u=\sum_{j\geq -1}\Delta_j u.\]
If $(\mathcal F^{-1}\chi(\la^{-j}\cdot))*u\to 0$ as $j\to -\infty$ in the space of tempered distributions, then for $j\in \Z$ we define $\dot \Delta_j u = \mathcal F^{-1}\phi_j*u$ and have
\[u=\sum_{j\in \Z}\dot \Delta_j u.\] 
For $s\in \R$, $1\leq p,q\leq \infty$, the non-homogeneous Besov spaces include tempered distributions modulo polynomials for which the norm
\begin{align*}
&\|u\|_{B^s_{p,q}}:= 
\begin{cases} 
 \bigg(\sum_{ j\geq -1} \big(    \lambda^{sj} \|\Delta_j u \|_{L^p(\R^n)}  \big)^q \bigg)^{1/q}   & \text{ if } q<\infty  
\\ \sup_{j\geq -1} \lambda^{sj} \| \Delta_j u \|_{L^p(\R^n)} & \text{ if } q=\infty
\end{cases}, 
\end{align*}is finite, while the homogeneous Besov spaces include tempered distributions modulo polynomials for which the norm 
\begin{align*}
&\|u\|_{\dot B^s_{p,q}}:= 
\begin{cases} 
 \bigg(\sum_{ j\in \Z} \big(    \lambda^{sj} \|\dot \Delta_j u \|_{L^p(\R^n)}  \big)^q \bigg)^{1/q}   & \text{ if } q<\infty  
\\ \sup_{j\in \Z} \lambda^{sj} \| \dot \Delta_j u \|_{L^p(\R^n)} & \text{ if } q=\infty
\end{cases},
\end{align*}
is finite.  In this section we work with homogeneous Besov spaces while in \S \ref{sec.SS} we work with non-homogeneous spaces.

Besov spaces are typically defined using a \emph{dyadic} partition of unity in Fourier space -- i.e.~they are defined as above with $\la=2$.  If we are working with $\la$-DSS data, we want the partition of unity to be \emph{$\la$-adic}.  Fortunately, Besov spaces are independent of the scaling factor used to define the partition of unity on the Fourier side. 
To see this, let $\{\phi_j\}$ be a dyadic partition of unity satisfying the properties set forth at the beginning of this section and let $\{\phi_j^\la \}$ be a $\la$-adic partition of unity satisfying the same properties.
Let $\dot \De_j$ and $\dot \De_j^\la$ denote the homogeneous Littlewood-Paley operators generated by $\{\phi_j\}$ and $\{\phi_j^\la\}$ respectively. The next lemma confirms that $\dot \De_j$ and $\dot \De_j^\la$  generate equivalent norms for $\dot B^\si_{p,q}$ for any $\si\in \R$ and $1\leq q\leq \I$.  In particular, we have norm equivalence  for $\Bp$ when $3<p$.

\begin{lemma}
Let $\la>1$. Let $\dot \De_j$ and $\dot \De_j^\la$ be as defined above.  If $\si\in \R$ and $p,q \in [1, \infty]$, then any $f$ in the homogeneous Besov space $\dot B^\si_{p,q}$ satisfies  
\begin{align}
\label{Besov.lambda}
\norm{f}_{\dot B^\si_{p,q} } =\bigg\|   
	2^{\si k}\|\dot \De_k f \|_{L^p}
\bigg\|_{l^q_k}
\approx 
\bigg\|   
	\la^{\si j}\|\dot \De_j^\la f \|_{L^p}
\bigg\|_{l^q_j} .
\end{align}
\end{lemma}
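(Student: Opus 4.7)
The plan is to use standard Littlewood--Paley techniques to compare the two decompositions, exploiting that each block of one scale interacts with only boundedly many blocks of the other scale.

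First, I would record the Fourier support structure of the two families. Since $\phi_j=\chi(\la^{-j}\cdot)-\chi(\la^{-j+1}\cdot)$ is supported in $\{\la^{j-1}\lesssim |\xi|\lesssim \la^{j+1}\}$ in the $\la$-adic case (and likewise with $\la$ replaced by $2$ in the dyadic case), the product $\phi_j^{\la}\cdot\phi_k$ can be nonzero only when the annuli overlap, i.e.\ only when $|k-j\log_2\la|\le N(\la)$ for some finite integer $N(\la)$ depending only on $\la$. In particular, for each $j$ there are at most $2N(\la)+1$ values of $k$ for which $\dot\De_j^{\la}\dot\De_k \ne 0$, and vice versa.

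Second, I would note the uniform boundedness of each block on $L^p$: the kernel $\mathcal F^{-1}\phi_j^{\la}$ scales as $\la^{3j}(\mathcal F^{-1}\phi^\la)(\la^j\cdot)$, so $\|\mathcal F^{-1}\phi_j^\la\|_{L^1}=\|\mathcal F^{-1}\phi^\la\|_{L^1}$ is independent of $j$, and Young's inequality yields $\|\dot\De_j^\la g\|_{L^p}\le C\|g\|_{L^p}$ uniformly in $j$, with the analogous statement for $\dot\De_k$.

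Third, I would combine these two observations. For any $f$ representable as $\sum_k \dot\De_k f$ in the sense of tempered distributions modulo polynomials, the Fourier support argument collapses the sum
\[
\dot\De_j^\la f = \sum_k \dot\De_j^\la \dot\De_k f
\]
to the finite range $I_j:=\{k:|k-j\log_2\la|\le N(\la)\}$. Applying the uniform $L^p$ bound inside the sum and using $\la^{\sigma j}\approx 2^{\sigma k}$ for $k\in I_j$ (with constants depending only on $\sigma$ and $\la$) gives
\[
\la^{\sigma j}\|\dot\De_j^\la f\|_{L^p} \le C\sum_{k\in I_j} 2^{\sigma k}\|\dot\De_k f\|_{L^p}.
\]
Taking $\ell^q_j$ norms and using that the family $\{I_j\}_j$ covers each $k$ a bounded number of times (by Young's convolution inequality in $\ell^q$, or Minkowski and a change of indices when $q=\infty$) yields $\|\la^{\sigma j}\|\dot\De_j^\la f\|_{L^p}\|_{\ell^q_j}\le C\|2^{\sigma k}\|\dot\De_k f\|_{L^p}\|_{\ell^q_k}$. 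The reverse inequality is proved by the symmetric argument, interchanging the roles of the two partitions of unity.

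The only mildly subtle point is that the identity $f=\sum_k \dot\De_k f$ holds modulo polynomials, so the comparison takes place in the quotient space on which $\dot B^\sigma_{p,q}$ is defined; this is automatic provided one verifies at the outset that the low-frequency truncation $(\mathcal F^{-1}\chi(\la^{-j}\cdot))*f \to 0$ in $\mathcal S'/\mathcal P$ as $j\to-\infty$, which is the same convention used for both partitions and hence causes no difficulty. No single step is a real obstacle; the argument is entirely standard once the bounded-overlap property $|k-j\log_2\la|\le N(\la)$ is isolated.
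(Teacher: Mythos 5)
Your proposal is correct and follows essentially the same route as the paper's proof: both hinge on the bounded-overlap property of the $\la$-adic and dyadic annuli (your $I_j$ is the paper's $S_j$), the uniform $L^p$ bound for the projections via Young's inequality, the comparability $\la^{\si j}\approx 2^{\si k}$ on the overlap set, and symmetry for the reverse inequality. The only difference is cosmetic: you spell out the $\ell^q$ summation step (via Young's inequality in $\ell^q$ and bounded covering multiplicity) slightly more explicitly than the paper does.
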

\begin{proof}
Let $\phi(\xi)$ and $\phi^\la(\xi)$ be as above. So, $\phi$ is supported in $\frac 12 \le |\xi|\le 2$ and $\phi^\la$ is supported in $\la^{-1}\le |\xi|\le \la$.  Furthermore,
\[
\sum_j \phi (2^{-j} \xi) =\sum_j \phi^\la (\la^{-j} \xi) = 1\quad \forall \xi \not =0.
\]
Let $\dot \De_k f$ and $\dot \De_j^\la f$ be the corresponding Littlewood-Paley projection operators.
For each $j \in \Z$, let $S_j$ be the set of integers $k$ so that the intersection $[\la^{j-1},\la^{j+1}] \cap [2^{k-1},2^{k+1}]$ has positive measure. We have
$\dot\De_j ^\la f = \sum _{k \in S_j} \dot\De_j ^\la \dot\De_k f$, and thus
\EQ{
\la^{\si j} \norm{\dot \De_j ^\la f}_{L^p} 
&\le 
\sum_{k \in S_j} \la^{\si j} \norm{\dot\De_j ^\la \dot\De_k  f}_{L^p} 
\le 
\sum_{k \in S_j} \la^{\si j} C_1(\la)\norm{ \dot\De_k  f}_{L^p} 
}
Above we have used that $\dot\De_j ^\la$ is a convolution operator whose kernel is integrable with a uniform in $j$ bound $C_1(\la)$.  For each $k \in S_j$, we have
\[
\la^j = \la \la^{j-1} \le \la 2^{k+1} = 2\la 2^k,\quad
{\la^j = \la^{-1} \la^{j+1} \ge  \la^{-1} 2^{k-1} = (2\la)^{-1} 2^k.}
\]
Thus, {for all $\si \in \R$,}
\EQN{
\la^{\si j} \norm{\dot\De_j ^\la f}_{L^p} 
&\le C_1(\la)(2\la )^{|\si|}
\sum_{k \in S_j}  2^{k\si} \norm{ \dot\De_k  f}_{L^p} 
}
Since every $k \in S_j $ satisfies $(2\la)^{-1}\le 2^k \la ^{-j} \le 2 \la$, we have $\# S_j \le C(\la)$ independently of $j$. The above shows
\begin{align*}
\bigg\| \la^{\si j} \norm{\dot\De_j ^\la f}_{L^p} 
\bigg\|_{l^q_j}
&\leq
 C(\la,\si) \bigg\| \sum _{k\in S_j}  2^{k\si} \norm{\dot \De_k  f}_{L^p}\bigg\|_{l^q_j}
\\&\leq C(\la,\si)\bigg\|  2^{k\si} \norm{\dot \De_k  f}_{L^p}\bigg\|_{l^q_k},
\end{align*}
where $C(\la,\si)$ depends on $\la$ and $\si$ but not on $p$ or $q$.
The reversed inequality can be shown similarly. Hence we have \eqref{Besov.lambda}.
\end{proof}

The next lemma is the main technical result of this section.  It allows us to decompose any $\la$-DSS data in $\Bp$ into a small $\Bp$ part and a large $L^3_w$ part. The corresponding decomposition for \emph{self-similar} data is Lemma \ref{lemma.profileslicing2}.

\begin{lemma}\label{lemma.profileslicing}
Let $f$ be a $\la$-DSS, divergence free vector field in $\R^3$, and belong to $\dot B_{p,\infty}^{3/p-1}$ for some $\la \in (1,\infty)$ and $p\in (3,\infty)$. For any $\epsilon>0$, there exist divergence free $\la$-DSS distributions  $a\in L^3_w$ and $b\in \Bp$ so that 
 $f = a + b$ and $ \|b  \|_{\dot B_{p,\infty}^{3/p-1}}<\epsilon$.
\end{lemma}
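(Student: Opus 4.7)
The plan is to work with the $\la$-adic Littlewood--Paley decomposition whose equivalence to the standard one was established in the preceding lemma. For any $\la$-DSS distribution $f$, the identity
\EQN{
 \dot\De_j^\la f = T_\la^j g, \qquad g := \dot\De_0^\la f \in L^p, \qquad T_\la u(x) := \la u(\la x),
}
yields $\|g\|_{L^p} \sim \|f\|_{\Bp}$, the representation $f = \sum_{j\in\Z} T_\la^j g$ in $\mathcal S'/\mathcal P$, and the simplified norm $\|b\|_{\Bp} = \|\dot\De_0^\la b\|_{L^p}$ for any $\la$-DSS $b\in\Bp$ (all L--P pieces contribute identically under DSS scaling). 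Thus the problem reduces to approximating the single $L^p$ function $g$.

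Given $\epsilon>0$, I would construct $\tilde g := \mathbb P \tilde{\dot\De}_0^\la(\chi_R g)$, where $\chi_R$ is a radial smooth spatial cutoff, $\tilde{\dot\De}_0^\la$ has symbol $\tilde\phi$ equal to $1$ on $\mathrm{supp}\,\phi$ and supported in $\{\la^{-2}\le|\xi|\le\la^2\}$, and $\mathbb P$ is the Leray projection. Then $\tilde g$ is Schwartz (the Schwartz kernel $\mathcal F^{-1}\tilde\phi$ convolved with a compactly supported $L^p$ function is Schwartz), divergence free, Fourier-localized in the enlarged annulus, and $\tilde g \to \mathbb P g = g$ in $L^p$ as $R\to\infty$. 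Setting
\EQN{
 a := \sum_{j\in\Z} T_\la^j \tilde g, \qquad b := f - a,
}
both are $\la$-DSS and divergence free by construction (since $\mathbb P$ and the L--P projector commute with $T_\la$). The membership $a\in L^3_w$ follows from the Schwartz decay of $\tilde g$: by DSS, $\|a\|_{L^3_w}\sim \|a\|_{L^3(\{1\le|x|<\la\})}$, and the change of variable $y=\la^j x$ gives
\EQN{
 \|a\|_{L^3(\{1\le|x|<\la\})} \le \sum_j \|T_\la^j \tilde g\|_{L^3(\{1\le|x|<\la\})} = \sum_j \|\tilde g\|_{L^3(\{\la^j\le|x|<\la^{j+1}\})} < \infty.
}

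The estimate $\|b\|_{\Bp}<\epsilon$ is the main computation. Using $\dot\De_0^\la T_\la^k = T_\la^k\dot\De_{-k}^\la$ and the scale-partition identity $\sum_k \phi(\la^{-k}\cdot)\equiv 1$ (which, applied to $g$, gives $g = \sum_k T_\la^{-k}\dot\De_k^\la g$), one obtains the key formula
\EQN{
 g - \dot\De_0^\la a \;=\; \sum_{k\in\Z} T_\la^{-k}\dot\De_k^\la(g-\tilde g).
}
The crucial observation is that both $g$ and $\tilde g$ are Fourier-supported in bounded annuli, hence $\dot\De_k^\la(g-\tilde g)=0$ outside a fixed range (concretely $|k|\le 3$). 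The sum is therefore finite, and the scaling $\|T_\la^{-k}h\|_{L^p}=\la^{-k(1-3/p)}\|h\|_{L^p}$ combined with the $L^p$-boundedness of $\dot\De_k^\la$ yields $\|b\|_{\Bp} = \|g - \dot\De_0^\la a\|_{L^p} \le C(\la,p)\|g-\tilde g\|_{L^p}$, which is less than $\epsilon$ for $R$ large. The main observation driving the proof is this finite-sum reduction: the DSS structure collapses the decomposition problem to approximation of one $L^p$ function, and the joint frequency localization of $g$ and $\tilde g$ kills all but finitely many cross-scale terms in the error.
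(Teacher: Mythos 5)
Your proposal is correct and follows essentially the same route as the paper's proof: reduce to the zero-scale block $g=\dot\De_0^\la f$, approximate it by a Schwartz, frequency-localized, divergence-free function, extend by the DSS scaling, then get $L^3_w$ membership from Schwartz decay and the $\Bp$ smallness from the fact that only finitely many scales have overlapping frequency supports (the paper's ``$\phi_0(\xi)(\phi_{-1}+\phi_0+\phi_1)(\la^j\xi)=0$ except for finitely many $j$'' is exactly your finite-sum reduction). The one organizational difference is that you apply $\mathbb P$ to the single block $\tilde g$ rather than to $a$ and $b$ at the end as the paper does --- this is legitimate and even slightly cleaner, since it avoids the paper's need for boundedness of $\mathbb P$ on $L^3_w$ and for the check that Riesz transforms preserve discrete self-similarity, but you should justify the one point your parenthetical skips: $\mathbb P$ preserves the Schwartz class here only because $\tilde{\dot\De}_0^\la(\chi_R g)$ has Fourier support in an annulus away from the origin, where the Leray symbol is smooth.
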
  

In the proof we will use the Helmholtz
projection $\mathbb P$ (or ``Leray projection'' in \cite[p.106]{LR}),
which maps a Banach space of vector fields in $\R^3$ to its subspace of  divergence free vector fields.
It is  
given by 
\begin{equation}
\label{Helmholtz}
(\mathbb Pg)_j= g_j+{\textstyle \sum_{k=1}^3} R_jR_k g_k
\end{equation}
where $R_k$ is the $k$-th Riesz transform with symbol $i \xi_k/|\xi|$.  In the variable $x$ this is given by the integral operator
\[
 R_kg ( x)=cP.V.\int \frac {y_k} {|y|^4} g(  x-y)\,dy.
\]
Note that  $\mathbb P$ is a bounded operator from $\Bp$ to $\Bp$ and from $L^3_w$ to $L^3_w$. For $\Bp$ spaces this is trivial since they're built on $L^p$ norms, where Calderon-Zygmund operators are bounded. For $L^3_w$, see \cite[Chapter 5,~Theorem 3.15]{Stein}.

\begin{proof} 
Let $f$ be as in the lemma's statement. {Let $\dot \De_j$ be the $\la$-adic spectral projection described in the beginning of this section.} Since $\dot \De_0 f\in L^p$, for any $\epsilon_1>0$, we may find functions $a_1$ and $b_1$ satisfying: 
\begin{align*}
&\dot \De_0 f = a_1 +b_1,
\\&  b_1\in L^p\mbox{ and } \|   b_1\|_{L^p}\leq \e_1,
\\& a_1\in C_0^\I.
\end{align*} 
Let 
\[
\tilde \De_0 = \sum_{j=-1,0,1}\dot \De_j.
\]
Looking at the Fourier side, it is clear that $\tilde \De_0 \dot \De_0  = \dot \De_0 $.
Let $a_2 =\tilde \De_0  a_1$ and $b_2 = \tilde \De_0 b_1$.  
Then, $\dot \De_0 f = \tilde \De_0 \dot \De_0 f= a_2+b_2$. 
Let 
\[
 	a = \mathcal F^{-1} \bigg(\sum_{j\in \Z}  \la^{2j}(\mathcal F a_2)(\la^j \xi)    \bigg),
\]
and 
\[
  b = \mathcal F^{-1} \bigg(\sum_{j\in \Z}  \la^{2j}(\mathcal F b_2)(\la^j \xi)    \bigg).
\]
Direct calculation shows that, if $f(x)$ is $\la$-DSS, that is, $f(x) = \la f(\la x)$ for any $x$, then its Fourier transform satisfies
\begin{align}\label{eq.interscale}
\hat f (\xi) =
\la^2 \hat f (\la \xi) ,\quad \forall \xi \in\R^3.
\end{align}It follows that
$\la^{2j}  \hat f(\la^j\xi) = \hat f(\xi)$ for any $j \in \mathbb{Z}$.
Thus,
\begin{align*}
(\hat a +\hat b )(\xi)&=\sum_{j\in \Z} \la^{2j} (\mathcal F(a_2+b_2))(\la^j \xi)
=\sum_{j\in \Z} \la^{2j} \big(\phi \hat f \big)(\la^j\xi) 
=\sum_{j\in \Z} \phi_{-j}(\xi)\hat f(\xi) = \hat f(\xi).
\end{align*} 
Therefore, $f=a+b$. By their construction, $a$ and $b$ satisfy \eqref{eq.interscale} and are therefore $\la$-DSS.  

Note that $f$ is $\la$-DSS if and only if 
\begin{align}\label{DSS.Fourier} 
\dot \Delta_j f(x)=\lambda^{j-i}\dot \Delta_i f(\lambda^{j-i} x), \quad \forall i,j\in \Z.
\end{align} 
This follows from the fact that
\[
f_j(x) \mapsto \phi_j(\xi) \hat f (\xi) =\la^{-2j} \phi(\xi\la^{-j})\hat f (\xi \la^{-j}) = \la^{-2j} (\phi \hat f) (\xi \la^{-j})\mapsto \la^{j}f_0(\la^jx),
\]
where $\mapsto$ is the image under either the Fourier or inverse Fourier transform and we have used the dilation property of the Fourier transform.

To obtain a bound for $b$ in $\Bp$, observe that
\begin{align*}
\mathcal F (\dot \De_0 b) =\phi_0(\xi) \sum_{j\in \Z}\la^{2j} ( \phi_{-1} +\phi_0 +\phi_1)(\la^j \xi) \mathcal F b_1(\la^j\xi). 
\end{align*}
Since $\phi_0(\xi) ( \phi_{-1} +\phi_0 +\phi_1)(\la^j \xi) =0 $ except for finitely many values of $j$, by Young's convolution inequality we have
\[
\|\dot \De_0 b\|_{L^p}\leq C \|b_1\|_{L^p} \leq C\epsilon_1,
\]
where $C$ only depends on our original choice of $\phi$.
 It follows from \eqref{DSS.Fourier} that
\[
\|b\|_{\Bp}\leq C \e_1.
\]

Since $a$ is $\la$-DSS, to show $a\in L^3_w$, it suffices to show $a\in L^\I (B_\la \setminus B_1)$.  Since $a_1\in C_0^\I$, we know $a_2$ is in the Schwartz class.  With a little work it follows that $\dot \De_0 a$ is also in the Schwartz class, and, therefore, $|\dot \De_0 a(x)|\lesssim (1+x^{2})^{-1}$.  Because $a$ is also $\la$-DSS, we see that
\[
 |a(x)|\leq \sum_{j\in \Z} |\dot \De_j a(x)|  \leq \sum_{j\in \Z} \la^j \dot{\De}_0 a(\la^jx) \lesssim \sum_{j\geq 0} \frac {\la^j} {1+\la^{2j}x^2} +\sum_{j<0} \la^j \|\dot \De_0a\|_{L^\I}<\I. 
\] 
Therefore, $a\in L^3_w$.

To make $a$ and $b $ divergence free we simply apply the Helmholtz
projection $\mathbb P$ \eqref{Helmholtz}.
With a slight abuse of notation, let $a=\mathbb P a$ and $b=\mathbb P b$ so that $a$ and $b$ are divergence free and we still have $f=a+b$.  Since $\mathbb P$ is a bounded operator on $\Bp$ and on $L^3_w$, we have 
 $a\in L^3_w$ and $b\in \Bp$.  Furthermore, by taking $\e_1$ sufficiently small we can ensure that $\|b\|_{\Bp}<\e$, where $\e$ is given in the lemma's statement.  

It remains to check that $\mathbb P$ preserves discrete self-similarity. 
If $g$ is $\la$-DSS for some $\la>1$ then
\begin{align*}
  R_kg ( x)=cP.V.\int \frac {y_k} {|y|^4} g(  x-y)\,dy
 &= \lambda cP.V. \int \frac {  y_k } {|  y  |^4}  g( \lambda x- \lambda y)  \,dy
\\& = \lambda cP.V.\int \frac {  \lambda y_k } {| \lambda y  |^4}  g( \lambda x- \lambda y)  \lambda^3 \,dy
\\&=\lambda cP.V.\int \frac {   z_k } {|   z  |^4}  g( \lambda x- z)  \,dz
\\&=\lambda R_kg(\lambda x),
\end{align*}
i.e.~$R_kg$ is also $\lambda$-DSS.  Hence $ a$ and $ b$ are discretely self-similar.
\end{proof}

\section{The time-periodic perturbed Leray equations}\label{sec.evolutionLeray}

In this section we construct a periodic weak solution to the perturbed Leray system
\begin{equation}
\label{eq:wholeSpaceLeray}
\begin{array}{ll}
	\partial_s u -\Delta u=\frac 1 2 u+\frac 1 2 y\cdot \nabla u -\nabla p -u\cdot\nabla u-B\cdot\nabla u - u\cdot\nabla B	&\mbox{~in~}\R^3\times \R
	\\  \nabla\cdot u = 0  &\mbox{~in~}\R^3\times \R
	\\ 	\displaystyle \lim_{|y_0|\to\infty} \int_{B_1(y_0)}|u(y,s)-U_0(y,s)|^2\,dx= 0& \mbox{~for all~}s\in \R
	\\  u(\cdot,s)=u(\cdot, s+T) &\mbox{~in~}\R^3\mbox{~for all~}s\in \R,
\end{array}
\end{equation}
for given $T$-periodic divergence free vector fields $B$ and $U_0$.  Here $U_0$ serves as the boundary value of the system and is required to satisfy the following assumption.
\begin{assumption} \label{AU_0}
The vector field $U_0(y,s) :\R^3 \times \R \to \R^3$ is continuously differentiable in $y$ and $s$,  periodic in $s$ with period $T>0$, divergence free, and satisfies
\begin{align*}
& 	\partial_s U_0-\Delta U_0-\frac 1 2 U_0-\frac 1 2 y\cdot \nabla U_0 = 0, 
\\& U_0\in L^\infty (0,T;L^4\cap L^q(\R^3)),
\\& \partial_s U_0\in L^\infty(0,T;L_{\mathrm{loc}}^{6/5}(\R^3)),
\end{align*}
and
\[
\sup_{s\in [0,T]}\|U_0  \|_{L^q(\R^3\setminus B_R)}\leq \Theta(R),
\]
for some $q\in (3,\infty]$ and $\Theta:[0,\infty)\to [0,\infty)$ such that $\Theta(R)\to 0$ as $R\to\infty$.
\end{assumption}

We seek solutions in the distributional sense where we are testing against test functions in $\mathcal D_T$, the collection of all smooth divergence free vector fields in $\R^3 \times \R$ which 
are time periodic with period $T$ and whose supports are compact in space.

\begin{definition}[Periodic weak solution]
\label{def:periodicweaksolutionR3} 
Let $U_0$ satisfy Assumption \ref{AU_0} and assume $B$ is $T$-periodic and divergence free. 
The field $u$ is a periodic weak solution to \eqref{eq:wholeSpaceLeray} in $\R^3\times (0,T)$ if it is divergence free, if 
\begin{equation}\notag
U:= 
u-U_0\in L^\infty(0,T;L^2(\R^3))\cap L^2(0,T;H^1(\R^3)),
 \end{equation} 
and if
\begin{equation}\label{u.eq-weak}
\int_0^T \bigg( (u,\partial_s f)-(\nabla u,\nabla f)+(\frac 1 2 u+\frac 1 2 y\cdot\nabla u-u\cdot\nabla u -u\cdot \nabla B -B\cdot\nabla u,f)  \bigg)  \,ds =0,
\end{equation} 
holds for all $f \in \mathcal D_T$.  
This latter condition implies that $u(0)=u(T)$.

\end{definition}

If $u$ satisfies this definition then there exists a pressure $p$ so that $(u,p)$ constitute a distributional solution to \eqref{eq:wholeSpaceLeray} (see the standard construction of $p$ in \cite{Temam}).    Our main existence theorem is the following.  
 
\begin{theorem}[Existence of solutions to \eqref{eq:wholeSpaceLeray}]\label{thrm:existenceOnR3}
Assume $U_0(y,s)$ satisfies Assumption \ref{AU_0} with $q=10/3$ and $B\in C^1(\R^4)\cap L^\infty(\R;L^p(\R^3))$ and satisfies $\|B\|_{L^\infty(\R^3\times (0,T))}<\frac 1 {24}$. Then \eqref{eq:wholeSpaceLeray} has a periodic  weak solution $u$ in $\R^4$ with period $T$.
\end{theorem}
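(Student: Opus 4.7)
The first step is to reduce to a perturbation in the energy class. Setting $U := u-U_0$ and using that $U_0$ solves the linear part of the Leray system, $U$ must be a divergence free, $T$-periodic element of $L^\infty_s L^2 \cap L^2_s H^1$ satisfying
\[
\partial_s U-\Delta U-\tfrac12 U-\tfrac12 y\cdot\nabla U+\nabla p
= -(U+U_0)\cdot\nabla(U+U_0)-B\cdot\nabla(U+U_0)-(U+U_0)\cdot\nabla B.
\]
Testing against $U$ and using that $U$, $U_0$ and $B$ are divergence free kills $(U\cdot\nabla U,U)$, $(U_0\cdot\nabla U,U)$ and $(B\cdot\nabla U,U)$, while $\int y\cdot\nabla U\cdot U\,dy=-\tfrac{3}{2}\|U\|_2^2$, so the drift contributes an extra coercive term. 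The resulting identity is
\[
\tfrac12\tfrac{d}{ds}\|U\|_2^2+\|\nabla U\|_2^2+\tfrac14\|U\|_2^2
= -I_1-I_2-I_3-I_4-I_5,
\]
with $I_1=(U\cdot\nabla U_0,U)$, $I_2=(U_0\cdot\nabla U_0,U)$, $I_3=(B\cdot\nabla U_0,U)$, $I_4=(U\cdot\nabla B,U)$, and $I_5=(U_0\cdot\nabla B,U)$. The extra $\tfrac14\|U\|_2^2$ produced by the drift is what will make the time-periodic problem tractable.

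\textbf{Approximation scheme.}
Following the strategy used for the $L^3_w$ case in \cite{BT1}, I would construct $U$ as the limit of a double approximation: a Galerkin truncation of dimension $N$ inside a ball $B_R\subset\R^3$ with Dirichlet boundary conditions, so that the drift $y\cdot\nabla U$ is harmless at each finite-dimensional level and the integration by parts above remains valid. At each $(N,R)$ the problem becomes a finite-dimensional system of ODEs which must be solved $T$-periodically in $s$; applying Brouwer's fixed point theorem to the period-$T$ solution map of the Galerkin ODE system yields a $T$-periodic approximate solution $U_{N,R}$.

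\textbf{Uniform bounds and passage to the limit.}
The main task is to convert the energy identity above into a bound on $\int_0^T(\|\nabla U_{N,R}\|_2^2+\|U_{N,R}\|_2^2)\,ds$ that is uniform in $N$ and $R$. The terms $I_1$ and $I_2$ are handled by H\"older together with Gagliardo-Nirenberg and Young's inequality using $U_0\in L^\infty_s L^4$, while $I_3$ and $I_5$ are handled by combining $U_0\in L^{10/3}$ with the integrability of $B$ (integrating by parts to move derivatives off $U_0$ when needed). The key term is $I_4$: an integration by parts gives $I_4=-(U\cdot\nabla U,B)$ with $|I_4|\le\|B\|_\infty\|U\|_2\|\nabla U\|_2$, so the smallness $\|B\|_\infty<\tfrac{1}{24}$ lets this be absorbed into the dissipation and the $\tfrac14\|U\|_2^2$ coercive term simultaneously. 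Integrating over $[0,T]$ and using periodicity (the boundary contribution vanishes) closes the estimate. Aubin-Lions compactness in $s$ together with weak compactness in $y$ then produces a limit $U$ as $N\to\infty$ and then $R\to\infty$; this $U$ is a periodic weak solution, and the spatial boundary condition $\int_{B_1(y_0)}|u-U_0|^2\,dy\to 0$ as $|y_0|\to\infty$ is recovered from the decay $\sup_s\|U_0\|_{L^q(\R^3\setminus B_R)}\to 0$ in Assumption \ref{AU_0} together with the uniform $L^\infty_s L^2$ bound on $U$. The main obstacle is the quantitative energy closure: because $\|B\|_\infty<\tfrac{1}{24}$ is only numerically small, one must verify that the Young-inequality constants for $I_1$--$I_5$ are jointly compatible with the coercivity budget on the left side, so that a Gr\"onwall-type argument on $[0,T]$ yields a genuine bound rather than blow-up in $s$.
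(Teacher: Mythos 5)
Your reduction $U = u - U_0$ is where the argument breaks down, and the failure is not a matter of tuning Young-inequality constants. The dangerous term in your list is $I_1 = (U\cdot\nabla U_0, U)$: after integration by parts and H\"older/Gagliardo--Nirenberg it is bounded by $\|U_0\|_{L^4}\|U\|_{L^2}^{1/4}\|\nabla U\|_{L^2}^{7/4}$, and Young's inequality then yields $\epsilon\|\nabla U\|_{L^2}^2 + C\epsilon^{-7}\|U_0\|_{L^4}^{8}\|U\|_{L^2}^2$. Since $U_0$ is \emph{large} (only $B$ is assumed small; Assumption \ref{AU_0} imposes no smallness on $U_0$), the coefficient $C\|U_0\|_{L^4}^8$ can exceed any fixed coercivity budget, while the left side carries only $\frac14\|U\|_{L^2}^2 + \|\nabla U\|_{L^2}^2$. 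For a Cauchy problem this would merely give exponential growth on a finite interval, but for the time-periodic problem the a priori bound must be self-sustaining: to apply Brouwer you need the period-$T$ solution map of the Galerkin system to map some ball $B^k_\rho$ into itself, and a differential inequality of the form $\frac{d}{ds}\|U\|^2_{L^2} \le \bke{C(U_0) - \frac14}\|U\|_{L^2}^2 + \dots$ with $C(U_0)$ large produces no invariant ball. No choice of constants repairs this: a term quadratic in $U$ with a large coefficient cannot be absorbed. The paper flags exactly this obstruction, noting that the bound $\int (f\cdot\nabla U_0)\cdot f \le \alpha\|f\|_{H^1}^2$ ``does not hold for general $U_0$ satisfying Assumption \ref{AU_0}.''

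The missing idea is the revised asymptotic profile $W$ of Lemma \ref{lemma:W}: one replaces $U_0$ by $W = \xi U_0 + w$, where $\xi$ vanishes on a large ball $B_{R_0}$ and $w$ is a Newtonian-potential corrector restoring $\nabla\cdot W = 0$. Because Assumption \ref{AU_0} provides the decay $\sup_s\|U_0\|_{L^q(\R^3\setminus B_R)}\le\Theta(R)\to 0$, one can choose $R_0$ so large that $\|W\|_{L^\infty(0,T;L^q)}\le \alpha$ with $\alpha$ as small as needed (here $q=10/3$); then $|(U\cdot\nabla W, U)| = |(U\cdot\nabla U, W)| \le \|W\|_{L^{10/3}}\|U\|_{L^{5}}\|\nabla U\|_{L^2}\le \frac1{24}\|U\|_{H^1}^2$ \emph{is} absorbable. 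The Galerkin perturbation is then $u = U + W$, not $u = U + U_0$: the large near-field part $(1-\xi)U_0$ never enters the energy estimate at all, and it reappears only at the end through $W - U_0 \in L^\infty(0,T;L^2)\cap L^2(0,T;H^1)$, which is what shows $u - U_0$ lies in the energy class required by Definition \ref{def:periodicweaksolutionR3} and recovers the boundary condition at spatial infinity. With this replacement, the rest of your scheme (coercivity $\frac14\|U\|_{L^2}^2$ from the drift, smallness of $B$ for $I_4$, Brouwer for the period map, weak and local strong compactness in the limit) does match the paper's proof; note also that the source terms coupling $W$ and $B$ are where the restriction $p<6$ and the hypothesis $B\in L^\infty(\R;L^p)$ enter, via the splitting $W = W_1 + W_2\in L^2 + L^{2p/(p-2)}$ used in \eqref{ineq.sourcetermbound}, a point your sketch of $I_3$ and $I_5$ would need to reproduce.
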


To prove Theorem \ref{thrm:existenceOnR3} we replace $U_0$ by an auxiliary vector field $W$ which is constructed to ensure 
\[
\int (f\cdot\nabla W )\cdot f \leq \al \| f \|_{H^1}^2,
\]
for a given value $\al\in (0,1)$ and any $f\in H^1_0$.  This bound does not hold for general $U_0$ satisfying Assumption \ref{AU_0}. A suitable construction of $W$ is given in \cite[Lemma 2.5]{BT1} and we recall it for convenience. To do so, fix $Z\in C^\infty(\R^3)$ with $0 \le Z \le 1$, $Z(x)=1$ for $|x|>2$ and $Z(x)=0$ for $|x|<1$. This can be done so that $|\nb Z|+|\nb^2 Z| \lec 1$.
 For a given $R>0$, let $\xi(y)=Z(\frac yR)$. It follows that $|\nabla^k \xi|\lesssim R^{-k}$ for $k\in \{ 0,1\}$. 

\begin{lemma}[Revised asymptotic profile]
\label{lemma:W}
Fix $q\in (3,\infty]$ and suppose $U_0$ satisfies Assumption \ref{AU_0} for this $q$. 
Let $Z\in C^\infty(\R^3)$ be as above.
For any $\alpha\in (0,1)$, there exists $R_0=R_0(U_0,\alpha)\ge 1$ so that letting $\xi(y) =Z(\frac y{R_0})$ and setting
\begin{equation}
  W (y,s)= \xi(y) U_0(y,s) + w(y,s),
\end{equation}
where 
\begin{equation}
w(y,s)=\int_{\R^3}\nabla_y \frac 1 {4\pi |y-z|} \nabla_z \xi(z) \cdot U_0 (z,s) \,dz,
\end{equation}
we have that $W$ is locally continuously differentiable in $y$ and $s$, $T$-periodic, divergence free,
 $U_0 - W \in L^\infty(0,T; L^2(\R^3))$, and 
\begin{equation}\label{ineq:Wsmall}
\|W\|_{L^\infty(0,T;L^q(\R^3))}\leq \alpha, %
\end{equation} 
\begin{equation}\label{WL4.est}
\norm{W}_{L^\infty(0,T;L^4(\R^3))}\leq c(R_0,U_0),
\end{equation}
and
\begin{equation}
\label{LW.est}
\norm{	\partial_s W-\Delta W-\frac 1 2 W-\frac 1 2 y\cdot \nabla W}_{L^\infty(0,T; H^{-1}(\R^3))} \leq c(R_0,U_0), %
\end{equation}
where $c(R_0,U_0)$ depends on $R_0$ and quantities associated with $U_0$ which are finite by Assumption \ref{AU_0}.  
\end{lemma}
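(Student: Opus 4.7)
My plan is to first identify $W$ with the Helmholtz projection $\mathbb P(\xi U_0)$, where $\mathbb P = I - \nabla \Delta^{-1} \nabla \cdot$. Using $\nabla\cdot U_0=0$, we have $\nabla \cdot (\xi U_0) = \nabla \xi \cdot U_0$, and the integral formula for $w$ rewrites as $w = -\nabla \Delta^{-1}(\nabla \xi \cdot U_0) = -\nabla \Delta^{-1}\nabla\cdot(\xi U_0)$, so $W = \xi U_0 + w = \mathbb P(\xi U_0)$. This algebraic identification is the backbone of the argument: divergence-freeness, $T$-periodicity, and the local $C^1$ regularity (away from $\supp \nabla\xi$) all drop out for free, and the $L^r$ bounds on $W$ reduce to $L^r$ bounds on $\xi U_0$ via the Calder\'on--Zygmund boundedness of $\mathbb P$ on $L^r$ for $1<r<\infty$.

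Next I would verify the easy quantitative bounds. For the $L^q$ smallness,
\[
\|W\|_{L^q} \lesssim \|\xi U_0\|_{L^q} \leq \|U_0\|_{L^q(|y|>R_0)} \leq \Theta(R_0),
\]
which is $\leq \alpha$ once $R_0=R_0(U_0,\alpha)$ is large enough, by Assumption \ref{AU_0}. The bound \eqref{WL4.est} is identical with $L^4$ in place of $L^q$, using $U_0\in L^\infty(0,T;L^4)$. For $U_0-W\in L^\infty(0,T;L^2)$, split $U_0-W = (1-\xi) U_0 - w$; the cutoff part is supported in $|y|\leq 2R_0$ and lies in $L^2$ by H\"older from $L^4$, while the $L^4$ boundedness of $\nabla\Delta^{-1}\nabla\cdot$ controls $w$ on any fixed ball, and for $|y|\gg R_0$ the explicit representation, together with a uniform bound on the kernel over $\supp\nabla\xi\subset\{R_0\leq|z|\leq 2R_0\}$ and H\"older on the remaining integral, yields $|w(y)|\lesssim C(R_0,U_0)|y|^{-2}$, which is $L^2$ at infinity in $\R^3$.

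The main obstacle is \eqref{LW.est}. Writing $\mathcal L = \partial_s - \Delta - \tfrac 12 - \tfrac 12 y\cdot\nabla$ and exploiting $\mathcal L U_0 = 0$, a direct computation on the first piece kills the $\xi\mathcal L U_0$ term and leaves only commutators,
\[
\mathcal L(\xi U_0) = -(\Delta \xi) U_0 - 2\nabla \xi \cdot \nabla U_0 - \tfrac 12 (y\cdot \nabla \xi) U_0,
\]
all supported in the bounded annulus $R_0\leq|y|\leq 2R_0$; pairing against $\phi\in H^1$ and integrating by parts in the middle term shifts $\nabla$ off of $U_0$ onto $\phi$ and $\xi$, after which everything is controlled by $\|U_0\|_{L^4}\|\phi\|_{H^1}$ via H\"older and Sobolev embedding, with constants absorbing powers of $R_0$. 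For the $w$-part, which is more delicate because $\nabla\Delta^{-1}$ is nonlocal, I would write $w=\nabla g$ with $g=-\Delta^{-1}\nabla\cdot(\xi U_0)$ and use the identity $y\cdot\nabla(\nabla g)=\nabla(y\cdot \nabla g)-\nabla g$ to absorb the $y\cdot\nabla$ part into a pure gradient (harmless in the $H^{-1}$ pairing against divergence-free test fields, which is all that is needed for the weak formulation of \eqref{eq:wholeSpaceLeray}), then bound $\partial_s w = -\nabla\Delta^{-1}\nabla\cdot(\xi\partial_s U_0)$ and $\Delta w$ in $H^{-1}$ via the embedding $L^{6/5}\hookrightarrow H^{-1}$ and $L^r$-boundedness of $\nabla\Delta^{-1}\nabla\cdot$, invoking $\partial_s U_0 \in L^\infty(0,T;L^{6/5}_{\mathrm{loc}})$ restricted to $\supp\xi$. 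Taking the supremum over $s\in[0,T]$ and collecting constants yields \eqref{LW.est} with dependence on $R_0$ and the quantities from Assumption \ref{AU_0}.
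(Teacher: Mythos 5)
Your identification $W=\mathbb P(\xi U_0)$ with $\mathbb P=I-\nabla\Delta^{-1}\nabla\cdot$ is exactly the structure of the construction the paper imports from \cite[Lemma 2.5]{BT1}, and your treatment of divergence-freeness, periodicity, \eqref{WL4.est}, $U_0-W\in L^\infty(0,T;L^2)$, and the commutator terms in $\mathcal L(\xi U_0)$ is sound. The genuine gap is in \eqref{LW.est}. You dispose of the dilation term by writing $y\cdot\nabla w+w=\nabla(y\cdot\nabla g)$ and discarding this pure gradient because it pairs to zero against divergence-free test fields. That proves a bound on $\mathcal L W$ in the dual $X^*$ of the space of divergence-free $H^1$ fields, not in $H^{-1}(\R^3)$ as \eqref{LW.est} asserts, and your own estimates cannot close the difference: the decay $|w(y)|\lesssim |y|^{-2}$ you establish only gives $|y\cdot\nabla g|=|y\cdot w|\lesssim |y|^{-1}$ at infinity, which is not square integrable, so the discarded gradient is not known (from your bounds) to lie in $H^{-1}$ at all. (The $X^*$ bound would in fact suffice for the Galerkin argument of Theorem \ref{thrm:existenceOnR3}, but it is not the statement of the lemma.) The missing ingredient is a pointwise estimate: since $\nabla w$ is the Hessian of the Newtonian potential of the compactly supported density $\nabla\xi\cdot U_0$, one has $|\nabla w(y)|\le C(R_0,U_0)(1+|y|)^{-3}$ --- precisely \eqref{ineq:wgradient}, which the paper quotes from the proof in \cite{BT1} --- and then $y\cdot\nabla w\in L^\infty(0,T;L^2)\hookrightarrow L^\infty(0,T;H^{-1})$ directly, with nothing discarded. (Alternatively, $\int\nabla\xi\cdot U_0\,dz=\int\nabla\cdot(\xi U_0)\,dz=0$ improves the decay of $w$ itself to $|y|^{-3}$, putting $y\cdot w$ in $L^2$.)

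Two smaller but real defects. First, your bound on $\partial_s w$ applies Calder\'on--Zygmund boundedness of $\nabla\Delta^{-1}\nabla\cdot$ to $\xi\,\partial_s U_0$ ``restricted to $\supp\xi$''; but $\supp\xi$ is unbounded ($\xi\equiv 1$ for $|y|>2R_0$), so Assumption \ref{AU_0}, which gives only $\partial_s U_0\in L^\infty(0,T;L^{6/5}_{\mathrm{loc}})$, does not place $\xi\,\partial_s U_0$ in $L^{6/5}(\R^3)$. The repair is to use $\nabla\cdot(\xi\partial_s U_0)=\nabla\xi\cdot\partial_s U_0$ (valid since $\partial_s U_0$ is divergence free), which is supported in the compact annulus $\{R_0\le |y|\le 2R_0\}$, and then the Hardy--Littlewood--Sobolev mapping $\nabla\Delta^{-1}:L^{6/5}\to L^2\hookrightarrow H^{-1}$. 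Second, the smallness \eqref{ineq:Wsmall} via $L^q$-boundedness of $\mathbb P$ fails when $q=\infty$, which the lemma permits ($q\in(3,\infty]$); in that case one must estimate $w$ from the kernel, $|w(y)|\le C\int_{R_0\le|z|\le 2R_0}|y-z|^{-2}\,|\nabla\xi|\,|U_0|\,dz\lesssim R_0\cdot R_0^{-1}\Theta(R_0)=\Theta(R_0)$, which needs no Riesz-transform bound. Both fixes are short, but as written these steps, and above all the $H^{-1}$ claim in \eqref{LW.est}, are not established.
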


The proof of Lemma \ref{lemma:W} says more about $w$ (see \cite[Proof of Lemma 2.5]{BT1}).  In particular, since 
\begin{equation}
\label{ineq:wgradient}|\nabla w(y)|\leq \frac {C(R_0,U_0)} {1+|y|^{3}},
\end{equation}
we have $\nabla w\in L^2( 0,T;L^2(\R^3))$.

\begin{proof}[Proof of Theorem \ref{thrm:existenceOnR3}]
The argument is similar to that from \cite[Section 2]{BT1}. Fix $T>0$ and assume $U_0$ satisfies Assumption \ref{AU_0} for this $T$. Assume $B$ is a given $T$-periodic divergence free vector field.  Let $W$ be as defined in Lemma \ref{lemma:W} with $\al=\frac 1{24}$, $q=10/3$, and the given $U_0$. We look for a solution $u$ to \eqref{eq:wholeSpaceLeray} of the form $u=U+W$ where $U$ is divergence free and solves the perturbed system
\[
  \label{perturbed-Leray}
\partial_s U -\Delta U-\frac 1 2 U-\frac 1 2 y\cdot\nabla U + (W+U)\cdot \nabla U + U\cdot \nabla W +B\cdot \nabla U +U\cdot\nabla B +\nabla p = -  \mathcal R(W) ,
\] 
where the source term is
\[ \label{RW.def}
\mathcal{R}(W) := 	\partial_s W-\Delta W-\frac 1 2 W-\frac 1 2 y\cdot \nabla W + W\cdot\nabla W+B\cdot\nabla W+W\cdot\nabla B.
\]

We use the Galerkin method following \cite{GS06} (see also \cite{Temam}).  The relevant function spaces are
\begin{align*}
&\mathcal V=\{f\in C_0^\infty({ \R^3;\R^3}) ,\, \nabla \cdot f=0 \},
\\& X = \mbox{the closure of~$\mathcal V$~in~$H_0^1(\R^3)$} ,
\\& H = \mbox{the closure of~$\mathcal V$~in~$L^2(\R^3)$},
\end{align*}where $H_0^1(\R^3)$ is the closure of $C_0^\infty(\R^3)$ in the Sobolev space $H^1(\R^3)$.  Let $X^*(\R^3)$ denote the dual space of $X(\R^3)$. 
Let $(\cdot,\cdot)$ be the $L^2(\R^3)$ inner product and $\langle\cdot,\cdot\rangle$ be the dual product for $H^1$ and its dual space $H^{-1}$, or that for $X$ and $X^*$.
Let $\{a_{k}\}_{k\in \N}\subset \mathcal V$ be an orthonormal basis of $H$.
For a fixed $k$, we look for an approximation solution of the form $U_k(y,s)= \sum_{i=1}^k b_{ki}(s)a_i(y)$.
Here, $b_k=(b_{k1},\ldots,b_{kk})$  is a T-periodic solution to the system of ODEs
\begin{align}\label{eq:ODE}
\frac d {ds} b_{kj} = & \sum_{i=1}^k A_{ij}b_{ki} +\sum_{i,l=1}^k B_{ilj} b_{ki}b_{kl} +C_j,%
\end{align}
for $j\in \{1,\ldots,k\}$ and
\begin{align}
\notag A_{ij}&=- (\nabla a_{i},\nabla a_j) 
		+ \frac 1 2 (a_i+y\cdot \nabla a_i, a_j) 
		 -(    a_i\cdot \nabla( W+B ),a_j)
		- ((W+B)\cdot\nabla a_i, a_j)
\\\notag B_{ilj}&=- ( a_i \cdot\nabla a_l, a_j)
\\\notag C_j&=-\langle \mathcal R (W),a_j\rangle.
\end{align}

For every $k\in \mathbb N$ the system of ODEs \eqref{eq:ODE} has a $T$-periodic solution $b_{k}\in H^1(0,T)$. In particular, for any  $U^{0}\in \operatorname{span}(a_1,\ldots,a_k)$, 
there exist $b_{kj}(s)$ uniquely solving \eqref{eq:ODE} with initial value $b_{kj}(0)=(U^{0},a_j)$, and belonging to $H^1(0,\tilde T)$ for some time $0<\tilde T\leq T$.  If $\tilde T<T$ assume it is maximal--i.e.~$||b_{k}(s)||_{L^2}\to\infty$ as $s\to \tilde T^-$.

Let
\begin{equation} \notag
U_k(y,s)=\sum_{i=1}^k b_{ki}(s)a_i(y).
\end{equation}
We will prove that 
\begin{equation}\label{ineq:uniformink}
||U_k||_{L^\infty (0,T;L^2(\R^3))} + ||U_k||_{L^2(0,T;H^1(\R^3))}<C,
\end{equation}where $C$ is independent of $k$.
Testing the equation against $U_k$ gives the initial estimate
\begin{equation} \label{ineq:1}
\frac 1 2 \frac d {ds} ||U_k||_{L^2}^2 + \frac 1 4 ||U_k||_{L^2}^2+ ||\nabla U_k||_{L^2}^2\leq -(U_k\cdot\nabla (B+W),U_k)  - \langle \mathcal{R}(W), U_k\rangle. 
\end{equation}

We need to estimate the right hand side of \eqref{ineq:1}. 
Note that \eqref{ineq:Wsmall} and the fact that $U_k$ is divergence free guarantee that
\begin{equation}
  \big| ( U_k\cdot \nabla W, U_k )  \big| \leq  \frac 1  {24} ||U_k||_{H^1}^2 .%
\end{equation}
Because $\|B\|_{L^\infty(\R^3\times (0,T))}<\frac 1 {24}$, we have
\[
 |(U_k\cdot\nabla B,U_k)| \leq \frac 1  {24} \| U_k  \|_{H^1}^2.
\]
To estimate the source terms involving $B$ note that since $2<3<2p/(p-2)$ using $p<6$ we have $L^3_w\subset L^2+L^{2p/(p-2)}$, i.e.~we can write $W=W_1+W_2$ where $W_1\in L^2$ and $W_2\in L^{2p/(p-2)}$.  This decomposition of $W$, H\"older's inequality, and the fact that $B\in L^\infty(0,T;L^p)\cap L^\infty(\R^3\times [0,T])$ leads to the bound  
\begin{align}
&\notag \bigg|\int  ( W\cdot\nabla B +B\cdot \nabla W) U_k\,dy\bigg|
\\&\leq C \|\nabla U_k \|_2\big( \|W_1\|_{L^2}\|B\|_{L^\infty} +\|W_2\|_{L^{2p/(p-2)}}\|B\|_{L^p}  \big)\notag
\\&\leq \frac 1  {12} \|\nabla U_k \|_2^2 + C \big( \|W_1\|_{L^2}\|B\|_{L^\infty} +\|W_2\|_{L^{2p/(p-2)}}\|B\|_{L^p}  \big)^2.\label{ineq.sourcetermbound} 
\end{align}
The estimate for the remaining terms from $\langle \mathcal{R}(W), U_k\rangle$ is
\begin{align}\notag
  &|\langle	\partial_s W-\Delta W-\frac 1 2 W-\frac 1 2 y\cdot \nabla W + W\cdot\nabla W , U_k \rangle |
  \\&\leq \frac 1 {24}\|U_k\|_{H^1}^2+C(\|\partial_s W-\Delta W-\frac 1 2 W-\frac 1 2 y\cdot \nabla W\|_{H^{-1}}^2+ \|W\|_{L^4} ).\label{est.RW2}
\end{align}

We thus obtain the inequality
\begin{equation}  \label{ineq:kenergyevolution}
	 \frac d {ds} ||U_k||_{L^2}^2
	 +   \frac 1 4 ||U_k||_{L^2}^2
	 +   \frac 1 4 ||\nabla U_k||_{L^2}^2 \leq C,
\end{equation}for a constant $C$ depending on $W$.
The Gronwall lemma implies
\begin{equation} \label{ineq:gronwall}
\begin{split}
e^{s/4} ||U_k(s)||_{L^2}^2
&\leq ||U^{0}||_{L^2}^2 +  \int_0^{\tilde T} e^{\tau/4}  C \,dt
\\
& \le  ||U^{0}||_{L^2}^2 + e^{T/4} C T,
\end{split}
\end{equation}
for all $s\in [0,\tilde T]$. Note that $\tilde T$ cannot be a blow-up time since the right hand side is finite.  Thus, $\tilde T=T$.  

By \eqref{ineq:gronwall} we can choose $\rho>0$ (independent of $k$) so that 
\begin{equation}\notag
 ||U^{0}||_{L^2}\leq \rho \Rightarrow ||U_{k}(T)||_{L^2}\leq \rho.
\end{equation}
Let $T: B_\rho^k\to B_\rho^k$ map $b_{k}(0)\to b_k(T)$, where $ B_\rho^k$ is the closed ball of radius $\rho$ in $\R^k$.  This map is continuous and thus has a fixed point by the Brouwer fixed-point theorem, implying there exists some $U^{0}\in \operatorname{span}(a_1,\ldots,a_k)$ so that $b_k(0)=b_k(T)$.

It remains to check that \eqref{ineq:uniformink} holds. The $L^\infty L^2$ bound follows from \eqref{ineq:gronwall}
since $\norm{U^0}_{L^2} \le \rho$, which is independent of $k$.  
Integrating  \eqref{ineq:kenergyevolution} in $s \in [0,T]$ and using $U_k(0)=U_k(T)$, we get
\begin{equation} \label{eq2.33}
\int_0^T \big(||U_k||_{L^2}^2
+  ||\nabla U_k||_{L^2}^2 \big)\,dt \le 4 C T
\end{equation}
which gives an upper bound for $\| U_k  \|_{L^2(0,T;H^1 )}$ that is uniform in $k$.

Standard arguments (e.g.~those in \cite{Temam}) imply that there exists a $T$-periodic $U\in {L^2(0,T;H_0^1(\R^3))}$ and a subsequence of $\{U_k\}$ (still denoted by $U_k$) so that 
\begin{align*}
& U_k\rightarrow U  \mbox{~weakly in}~L^2(0,T;X),
\\& U_k\rightarrow U  \mbox{~strongly in}~L^2(0,T;L^2(K))  \mbox{~for all compact sets~}K\subset \R^3,
\\& U_k(s)\rightarrow U (s) \mbox{~weakly in}~L^2 \mbox{~for all}~s\in [0,T].
\end{align*}
The weak convergence guarantees that $U(0)=U(T)$.  Thus $U$ is a periodic weak solution of the perturbed Leray system.  

Let $u=U+W$.  To finish the proof we need to check that \[W-U_0 \in L^\infty(0,T;L^2(\R^3))\cap L^2(0,T;H^1(\R^3)).\]
The $L^\I (0,T;L^2)$ estimate follows from Lemma \ref{lemma:W}.  The $L^2(0,T;H^1)$ estimate is easy to see since $\nabla w\in L^2(0,T;L^2)$ and $\nabla ((1-\xi) U_0)$ is smooth and compactly supported.  
Since $u-W$ and $W-U_0$ are in $L^\infty(0,T;L^2(\R^3))\cap L^2(0,T;H^1(\R^3))$, we also have $u-U_0 \in L^\infty(0,T;L^2(\R^3))\cap L^2(0,T;H^1(\R^3))$.
\end{proof}
 
\section{Construction of a discretely self-similar solution}\label{sec.DSS}

In this section we prove Theorem \ref{theorem.main} on the existence of discretely self-similar solutions.
We first recall a lemma from \cite{BT1}. 
\begin{lemma}\label{th:2.1}
Suppose $a_0$ is $\la$-DSS, divergence free, and belongs to $L^3_w$. Let $x,t,y,s$ satisfy \eqref{variables}.  Then
\begin{equation}\label{def:U0} 
U_0(y,s)= {\sqrt {t}} (e^{t\Delta}a_0)(x), 
\end{equation}satisfies Assumption \ref{AU_0} with $T=2\log \la$ and any $q \in (3,\I]$.
\end{lemma}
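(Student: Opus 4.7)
The plan is to verify each clause of Assumption~\ref{AU_0} by exploiting that $v(x,t):=(e^{t\Delta}a_0)(x)$ is the heat evolution of $a_0$ and that the similarity transform $y=x/\sqrt{t}$, $s=\log t$, $U_0(y,s)=\sqrt{t}\,v(\sqrt{t}y,t)$ converts parabolic properties of $v$ into the required properties of $U_0$. Divergence-freeness, smoothness in $y$ and $s$ (for $t>0$), and the identity $\partial_s U_0-\Delta U_0-\tfrac12 U_0-\tfrac12 y\cdot\nabla U_0=0$ follow from standard properties of the heat semigroup together with a direct chain-rule computation. The $T$-periodicity with $T=2\log\la$ follows because the $\la$-DSS relation $a_0(x)=\la a_0(\la x)$ transfers to $v$ via a rescaling of the heat-kernel integral, yielding $v(\la x,\la^2 t)=\la^{-1}v(x,t)$; substituting this into $U_0(y,s+2\log\la)$ recovers $U_0(y,s)$.

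For the integrability $U_0\in L^\infty(0,T;L^4\cap L^q)$ I would invoke the standard heat-kernel bound in Lorentz spaces
\EQN{
\|e^{t\Delta}a_0\|_{L^r(\R^3)}\le C_r\,t^{-\frac12+\frac{3}{2r}}\|a_0\|_{L^{3,\infty}},\qquad r\in[4,\infty],
}
combined with the dilation identity $\|U_0(\cdot,s)\|_{L^r}=t^{\frac12-\frac{3}{2r}}\|v(\cdot,t)\|_{L^r}$ to produce the scale-invariant estimate $\|U_0(\cdot,s)\|_{L^r}\lesssim\|a_0\|_{L^{3,\infty}}$, uniformly in $s\in[0,T]$; specializing to $r=4$ and $r=q$ yields the two required norms. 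The bound on $\partial_s U_0$ in $L^\infty_sL^{6/5}_{\loc}$ is read directly off the Leray equation, estimating each of $\Delta U_0,\,U_0$, and $y\cdot\nabla U_0$ on compact sets using the smoothness of $v$ uniformly on the compact time window $t\in[1,\la^2]$ and then invoking $T$-periodicity.

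The main obstacle is the uniform tail estimate $\sup_{s\in[0,T]}\|U_0\|_{L^q(\R^3\setminus B_R)}\le\Theta(R)$ with $\Theta(R)\to 0$, because $a_0$ itself does not decay at infinity (being $(-1)$-homogeneous in the DSS sense), so the decay must be supplied entirely by parabolic smoothing. For each fixed $s\in[0,T]$, $U_0(\cdot,s)\in L^q$ and therefore has vanishing $L^q$-tails; to make this uniform in $s$ I would use compactness of $[0,T]$ together with continuity of $s\mapsto U_0(\cdot,s)$ in $L^q$ -- which follows from continuity of the heat semigroup combined with the dilation relation -- and extract a common modulus $\Theta(R)$ by an equicontinuity argument. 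For the endpoint $q=\infty$ one additionally exploits $\|\nabla v(\cdot,t)\|_{L^\infty}\lesssim t^{-1}\|a_0\|_{L^{3,\infty}}$ to upgrade integral decay at a finite exponent $r>3$ to pointwise decay. Defining $\Theta(R)$ as this uniform modulus completes the verification.
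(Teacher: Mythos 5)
The paper itself does not prove this lemma --- it is recalled from \cite{BT1} without argument --- so there is no in-paper proof to compare against; judged on its own, your verification is correct and essentially complete. The chain-rule computation giving $\partial_s U_0-\Delta U_0-\tfrac12 U_0-\tfrac12 y\cdot\nabla U_0=0$, the transfer of the DSS relation to $v(\lambda x,\lambda^2t)=\lambda^{-1}v(x,t)$ and hence to $T$-periodicity, the scale-invariant bound $\|U_0(\cdot,s)\|_{L^r}\lesssim \|a_0\|_{L^{3,\infty}}$ from the Lorentz-space heat estimate plus the dilation identity, and reading $\partial_s U_0\in L^\infty(0,T;L^{6/5}_{\mathrm{loc}})$ off the equation are exactly the needed steps. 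The only delicate clause is the uniform tail bound, and your treatment is sound: for $q<\infty$ the set $\{U_0(\cdot,s):s\in[0,T]\}$ is compact in $L^q$ (strong continuity of the heat semigroup on $L^q$ plus continuity of dilations), and compact subsets of $L^q$ have uniformly vanishing tails; for $q=\infty$, where neither strong continuity nor fixed-time tail decay is automatic, you rightly substitute a separate argument, upgrading finite-exponent tail decay to pointwise decay via $\|\nabla e^{t\Delta}a_0\|_{L^\infty}\lesssim t^{-1}\|a_0\|_{L^{3,\infty}}$. Two small corrections: the Lorentz heat estimate should be quoted for all $r\in(3,\infty]$ (it follows from $G_t\in L^{p,1}$, $1+\tfrac1r=\tfrac1p+\tfrac13$, and O'Neil's convolution inequality), not merely $r\in[4,\infty]$, since you apply it with $r=q$ and $q$ may lie in $(3,4)$; and when invoking continuity of $s\mapsto U_0(\cdot,s)$ you should note it is continuity of $\tau\mapsto e^{\tau\Delta}g$ for the fixed $L^q$ function $g=v(\cdot,1)$, together with continuity of the dilation action on $L^q$ for $q<\infty$, that is being used --- this is precisely why the argument cannot cover $q=\infty$ and why your endpoint detour is genuinely necessary rather than optional.
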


We are now ready to prove Theorem \ref{theorem.main}.

\begin{proof}[Proof of Theorem \ref{theorem.main}]    

Assume $3< p< 6$.
We seek a solution $v$ to 3D NSE for a given divergence free, $\la$-DSS initial data $v_0\in \Bp$ by considering a perturbed problem.  Assume $v_0$ is $\la$-DSS. By Lemma \ref{lemma.profileslicing}, we can decompose $v_0=a_0+b_0$ where $a_0$ and $b_0$ are both $\la$-DSS, $a_0\in L^3_w$ and $\|b_0\|_{\Bp}<\epsilon_0$, where $\epsilon_0$ is a small constant.

{By \cite[Theorem 5.27]{BCD}, if $\epsilon_0$ is sufficently small, there is a unique solution $b\in K_p(\I)$ of 3D NSE with initial data $b_0$, where 
\EQ{
K_p(\I)  = \bket{ u \in C((0,\infty);L^p)\ : \ \norm{u}_{K_p} = \sup_{0<t<\infty} t^{\frac 12 - \frac 3{2p}}\norm{u(t)}_{L^p}}.
} 
By \cite[Theorem 5.40]{BCD}, $b$ also belongs to a strict subspace $E_p$ of $L^\infty(\R^+; \Bp)$, but we do not need this fact here.

 Let $b$ be the above solution and}
 $\pi_b$ the corresponding pressure.   Then, $v=a+b$ is a solution to 3D NSE with pressure $\pi=\pi_a+\pi_b$ if and only if $(a,\pi_a)$ satisfies
\begin{align}\label{eq.a}
&a_t-\Delta a +a\cdot \nabla a+b\cdot \nabla a+a\cdot \nabla b +\nabla \pi_a=0
\\\notag &\nabla \cdot a = 0;\qquad a(x,0)=a_0(x).
\end{align}

Note that $b$ is $\la$-DSS by the uniqueness of small solutions in the Koch-Tataru class. Therefore, $\sqrt{t}b(x,t)=B(y,s)$ where $B$ is time periodic with period $T=2\log\la$.   Also, $b$ is smooth (see \cite{LR}) and, therefore, so is $B$. By \cite[Theorem 20.3]{LR} (see also \cite{BCD}) we have
\[
\|b(t)\|_{L^p}^2\lesssim t^{-1+3/p} \|b_0 \|_{\Bp}^2,
\]
and, therefore, $B(y,s)\in L^\infty(\R ; L^p (
\R^3))$.  Indeed, we also have 
Since $b$ is in the Koch-Tataru class we also have decay in $L^\infty$, i.e.,~\[\|b(t)\|_{L^\infty(\R^3)}\lesssim t^{-1/2}\|b_0\|_{BMO^{-1}}. \]
Provided $\epsilon_0$ is sufficiently small (it can be chosen to be arbitrarily small in Lemma \ref{lemma.profileslicing}) it follows that 
\[
\|  B(y,s)\|_{L^\infty(\R\times \R^3)}<\frac 1 {24}.
\]

Let $U_0(y,s)= {\sqrt {t}} (e^{t\Delta}a_0)(x)$, as in \eqref{def:U0}.
Because $a_0$ is $\la$-DSS, divergence free, and belongs to $L^3_w$, we have by Lemma \ref{th:2.1} that $U_0(y,s)$ satisfies Assumption \ref{AU_0}.  

By Theorem \ref{thrm:existenceOnR3} with $B$ and $U_0$, we obtain a $T$-period solution $u$ to \eqref{eq:wholeSpaceLeray} and, undoing the DSS transform, we recover a $\la$-DSS solution $a$ to \eqref{eq.a}.  We thus obtain the desired $\la$-DSS solution $v=a+b$ to 3D NSE.

The pressure distribution $\pi$ for $v$ is given by $\pi=\pi_a+\pi_b$ where $\pi_a$ is the image under the change of variables \eqref{variables} of the pressure distribution $p(y,s)$ for $u(y,s)$ and $\pi_b$ is the pressure distribution associated with that Koch-Tataru solution $b$.

To complete the proof, note that  
\begin{equation} \notag  a-e^{t\Delta}a_0 \in L^\infty(1,\la^2;L^2(\R^3))\cap L^2(1,\la^2;H^1(\R^3)). \end{equation}
The $\la$-DSS scaling property implies that for all $t\in (0,\I)$ that
\begin{equation}\label{ineq.a}
||a(t)-e^{t\Delta}a_0||_{L^2}^2\lesssim t^{1/2} \sup_{1\leq \tau\leq \la^2} ||a(\tau)-e^{\tau \Delta}a_0||_{L^2}^2,
\end{equation}  
and
\begin{equation}\label{ineq.a2}
\int_0^t \| \nabla (a(\tau)-e^{\tau\Delta}a_0) \|_{L^2}^2\,d\tau \lesssim \int_1^{\la^2} \| \nabla (a(\tau)-e^{\tau\Delta}a_0) \|_{L^2}^2\,d\tau.
\end{equation} 
So, for any $t>0$, by interpolating  between \eqref{ineq.a} and \eqref{ineq.a2}, we see that
\EQ{
\int_{0}^t \norm{a(\tau)-e^{\tau\Delta}a_0}_{L^r}^q d\tau  &\le C_r t^{q/4},
}
for {all $r\in (2,6]$} and $q$ such that $\frac 2 q +\frac 3 r=\frac 3 2$.  This proves \eqref{th1.3-1}.

{
We found
$b\in K_p(\I)$ by \cite[Theorem 5.27]{BCD}. Its proof uses
\cite[Lemma 5.29]{BCD}, which implies that, for $ \frac 2p -\frac 13 <\frac 1r \le \frac 2p$,
\[
\norm{b(t)-e^{t\Delta} b_0}_{L^r} \le C \norm{b_0}_{\Bp}^2 t^{-\frac 12 + \frac 3{2r}},\quad \forall t>0.
\]
This shows \eqref{th1.3-2}.
Since $p \in (3,6)$, $\frac 13 < \frac 2p < \frac 23$, we can choose 
$\frac 1r \in (\frac 13, \frac 2p]$, i.e.,~$r \in [\frac p2, 3)$.
Then the exponent $-\frac 12 + \frac 3{2r}>0$ and $\norm{b(t)-e^{t\Delta} b_0}_{L^r} \to 0$ as $t \to 0^+$.
}
\end{proof}

\section{Self-similar solutions}\label{sec.SS}

In this section we prove Theorem \ref{theorem.main2} on the existence of self-similar solutions.

We first decompose the initial data.  
The definition of Besov spaces given in \S2 can be extended to describe non-homogeneous Besov spaces on compact 
smooth manifolds as in \cite[Ch.~23]{LR}. Let $M$ be a compact smooth manifold of dimension $d$ and assume $T$ is a distribution on $M$.  Then $T\in B^s_{p,q}(M)$ if and only if for every open subset $\Om$ of $M$, smooth differomorphism $h:\Om \to \R^d$, and test function $\phi$ supported on $\Om$, we have $ (\phi T )\circ h^{-1}\in B^s_{p,q}(\R^d)$. 
The norm of $B^s_{p,q}(M)$ is defined using a finite atlas of $M$ (the choice of atlas does not matter; any two give equivalent definitions).  Let $A$ be a finite set.
Let $\{\Om_\al\}_{\al\in A}$ be an open cover of $M$.  Let $h_{\al}$ be a diffeomorphism from $\Om_\al$ to $\R^2$. Let $\{\phi_\al\}_{\al\in A}$ be a partition of unity of $M$ with $\supp \phi_{\al} \subset \Om_{\al}$.  
Then,  \[\|T \|_{B_{p,p}^{3/p-1}(S^2)}:= \sum_{\al\in A} \|(\phi_\al T)\circ h^{-1}_\al  \|_{B_{p,p}^{3/p-1}(\R^2)}.\] 
Furthermore we have 
\[ T=\sum_{\al\in A,j\geq -1} T^\al_j,\]
where
\[
T_j^\al =  (\Delta_j (\phi_\al T \circ h_\al^{-1}))\circ h_\al.
\]  
  
 We need a lemma due to Cannone (see \cite[Proposition 23.1]{LR}).  
 Here, $S^{2}$ denotes the unit sphere in $\R^3$.
  
\begin{lemma}[Cannone's Lemma]\label{lemma.cannone} Let $p\in [1,\I]$ and $T$ be a distribution on $\R^3$ which is homogeneous of degree $-1$. The following are equivalent:
\begin{itemize}
\item[A.]$T\in \Bp(\R^3)$
\item[B.]$T|_{S^2} \in B_{p,p}^{3/p-1}(S^{2})$.
\end{itemize}
\end{lemma}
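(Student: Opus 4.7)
The plan is to exploit the $(-1)$-homogeneity of $T$ to collapse the Littlewood--Paley sum to a single dyadic block, and then identify the resulting $L^p$ quantity on $\R^3$ with a local Besov norm on $S^2$ using coordinate charts.

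First I would translate the homogeneity $T(\la x) = \la^{-1} T(x)$ onto the Fourier side as $\hat T(\la \xi) = \la^{-2} \hat T(\xi)$, and feed this into the definition of $\dot \De_j T$ to obtain the scaling identity
\[
\dot \De_j T(x) = 2^j (\dot \De_0 T)(2^j x), \qquad j \in \Z.
\]
A change of variable in the $L^p$ norm then gives $2^{j(3/p-1)} \|\dot \De_j T\|_{L^p(\R^3)} = \|\dot \De_0 T\|_{L^p(\R^3)}$ for every $j$, so condition A is equivalent to the single statement $\dot \De_0 T \in L^p(\R^3)$, with $\|T\|_{\Bp} = \|\dot \De_0 T\|_{L^p(\R^3)}$.

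Next I would unfold condition B. Write $T(x) = |x|^{-1} g(x/|x|)$ with $g := T|_{S^2}$, fix a finite atlas $\{(\Om_\al, h_\al)\}$ of $S^2$ with subordinate partition of unity $\{\phi_\al\}$, and decompose $T = \sum_\al T_\al$, where $T_\al(x) = |x|^{-1}(\phi_\al g)(x/|x|)$ is supported in a cone over $\Om_\al$. By the definition of Besov spaces on compact manifolds recalled in the excerpt, condition B is equivalent to $(\phi_\al g) \circ h_\al^{-1} \in B^{3/p-1}_{p,p}(\R^2)$ for every $\al$. The heart of the proof is then the cone-wise equivalence
\[
\|T_\al\|_{\Bp(\R^3)} \approx \|(\phi_\al g) \circ h_\al^{-1}\|_{B^{3/p-1}_{p,p}(\R^2)}.
\]
To establish it, I would pass to polar coordinates $x = r\om$ and use $h_\al$ to flatten $S^2$, so that $T_\al$ becomes a product-type function $F_\al(u)/r$ on $(0,\infty) \times U$ with $U \subset \R^2$. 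Multiplication by the smooth factor $1/r$ acts boundedly on the relevant Besov scale in a neighborhood of $r=1$, and --- thanks to Step 1 --- it suffices to analyze $\dot \De_0 T_\al$ on the single annulus $|x| \sim 1$, which is a scale-invariant slice of the cone. A Fubini-type transfer of Besov regularity between the radial--angular product $(r,u)$ and the angular factor $u$ then yields the equivalence; summing over the finitely many charts closes the proof.

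The main obstacle is the cone-wise step for the \emph{negative} regularity index $3/p - 1 < 0$ (which holds since $p > 3$): the usual difference and modulus-of-smoothness characterizations of Besov spaces do not apply, so one must argue either by duality (pairing against $B^{1-3/p}_{p',1}$) or through the heat-semigroup characterization. The latter is clean here, because for $(-1)$-homogeneous $T$ the heat extension obeys $(e^{t\Delta} T)(x) = t^{-1/2} (e^{\Delta} T)(x/\sqrt{t})$, so $\sup_{t>0} t^{1/2 - 3/(2p)} \|e^{t\Delta} T\|_{L^p(\R^3)} = \|e^{\Delta} T\|_{L^p(\R^3)}$, and an analogous heat-kernel statement on $S^2$ reduces the lemma to a concrete kernel estimate linking these two $L^p$ quantities.
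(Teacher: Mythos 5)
The paper itself does not prove this lemma: it is quoted as \cite[Proposition 23.1]{LR}, and the paper merely inspects that proof to extract the quantitative bound \eqref{ineq.cannone}. Measured against Lemari\'e-Rieusset's argument, your overall architecture is the right one, and your Step 1 is correct: homogeneity of degree $-1$ gives $\hat T(\la\xi)=\la^{-2}\hat T(\xi)$, hence $\dot\De_j T(x)=2^j(\dot\De_0T)(2^jx)$ and $\|T\|_{\Bp}=\|\dot\De_0T\|_{L^p(\R^3)}$, so condition A collapses to $\dot\De_0T\in L^p(\R^3)$. The chart-and-partition-of-unity unfolding of condition B also matches the definition recalled in Section \ref{sec.SS}.

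The heart of your argument, however, contains a genuine error. You claim that ``thanks to Step 1 --- it suffices to analyze $\dot\De_0 T_\al$ on the single annulus $|x|\sim 1$.'' This confuses frequency localization with spatial localization: $\dot\De_0 T$ is supported near unit \emph{frequencies}, not unit radii, and Step 1 hands you its $L^p$ norm over all of $\R^3$. The radial integration over all scales is exactly where the lemma's key feature comes from, namely the mismatch of third indices: $\dot B^{3/p-1}_{p,\infty}$ (index $\infty$) on $\R^3$ versus $B^{3/p-1}_{p,p}$ (index $p$) on $S^2$. Writing $g=T|_{S^2}$ and letting $\De^{S^2}_j$ denote Littlewood--Paley blocks on the sphere, on the shell $|x|\sim 2^j$, $j\ge 0$, the unit-scale frequency cutoff resolves angular scale $2^{-j}$, so that shell contributes roughly $2^{3j}\cdot 2^{-jp}\|\De^{S^2}_jg\|_{L^p(S^2)}^p$ to $\|\dot\De_0T\|_{L^p(\R^3)}^p$; summing over $j$ yields $\sum_j\bke{2^{j(3/p-1)}\|\De^{S^2}_jg\|_{L^p(S^2)}}^p$, i.e.\ precisely the $B^{3/p-1}_{p,p}(S^2)$ norm with its $\ell^p$ (not $\ell^\I$) summation over angular blocks. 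Restricting to $|x|\sim 1$ retains only the $j=0$ block and can never produce this norm; indeed your proposal never engages with the index change at all. The same defect undermines the heat-semigroup fallback: the identity $\sup_{t>0}t^{1/2-3/(2p)}\|e^{t\De}T\|_{L^p}=\|e^{\De}T\|_{L^p}$ is correct, but the matching characterization of $B^{3/p-1}_{p,p}(S^2)$ is an $L^p(dt/t)$ integral in $t$, not a supremum, and converting the sup on $\R^3$ into the integral on $S^2$ is again the radial Fubini computation you discarded. Two smaller gaps: the direction A $\Rightarrow$ B via your cone-wise equivalence requires multiplication by the $0$-homogeneous cutoffs $\phi_\al(x/|x|)$, which are singular at the origin, to be bounded on $\dot B^{3/p-1}_{p,\infty}(\R^3)$, and this is not addressed; and the heat characterization you invoke is valid only for negative regularity $3/p-1<0$, whereas the lemma is stated for all $p\in[1,\I]$.
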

By inspecting the proof of Lemma \ref{lemma.cannone} (\cite[pg. 238-239]{LR}) it is clear that 
\begin{align}\label{ineq.cannone}
  \|T\|_{\Bp(\R^3)}\leq \kappa  \|T|_{S^2}\|_{B_{p,p}^{3/p-1}(S^2)},
\end{align}
for a constant $\kappa$ that does not depend on $T$.
This leads to an analogue of Lemma \ref{lemma.profileslicing} for self-similar functions.
\begin{lemma} \label{lemma.profileslicing2}
Let $f$ be divergence free, $-1$-homogeneous, and belong to $\dot B_{p,\infty}^{3/p-1}$ for some $p\in (3,\infty)$. For any $\epsilon>0$, there exist divergence free $-1$-homogeneous distributions $a\in L^3_w$ and $b\in \Bp$ so that 
 $f = a + b$ and $ \|b  \|_{\dot B_{p,\infty}^{3/p-1}}<\epsilon$. 
\end{lemma}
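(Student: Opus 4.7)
The plan is to transfer the decomposition problem from $\R^3$ to $S^2$ via Cannone's Lemma, decompose the trace $f|_{S^2}$ into a smooth piece plus a small Besov piece on $S^2$, then extend each piece back to $\R^3$ by $(-1)$-homogeneity, and finally project onto divergence free fields.

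First, since $f$ is $(-1)$-homogeneous and belongs to $\dot B_{p,\infty}^{3/p-1}(\R^3)$, Lemma \ref{lemma.cannone} gives $g := f|_{S^2}\in B_{p,p}^{3/p-1}(S^2)$. Because $p<\infty$, smooth functions are dense in $B_{p,p}^{3/p-1}(S^2)$ (via local partitions of unity and mollification in charts). Thus for any $\e_1>0$ we may write $g=g_1+g_2$ on $S^2$ with $g_1\in C^\infty(S^2;\R^3)$ and $\|g_2\|_{B_{p,p}^{3/p-1}(S^2)}<\e_1$. Extend both pieces to $\R^3\setminus\{0\}$ by $(-1)$-homogeneity:
\begin{equation*}
a_1(x)=|x|^{-1}g_1(x/|x|),\qquad b_1(x)=|x|^{-1}g_2(x/|x|),
\end{equation*}
so that $f=a_1+b_1$ as $(-1)$-homogeneous distributions on $\R^3$.

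Next I verify the two quantitative properties. Applying the bound \eqref{ineq.cannone} to the $(-1)$-homogeneous $b_1$ yields $\|b_1\|_{\dot B_{p,\infty}^{3/p-1}(\R^3)}\le \kappa\e_1$. For $a_1$, smoothness of $g_1$ on the compact sphere gives a uniform bound $|a_1(x)|\le \|g_1\|_{L^\infty(S^2)}|x|^{-1}$, from which $|\{|a_1|>t\}|\le C t^{-3}$ and hence $a_1\in L^3_w$.

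Finally, to obtain divergence free pieces, apply the Helmholtz projection: set $a=\mathbb P a_1$ and $b=\mathbb P b_1$. The Riesz transforms are Fourier multipliers with symbols $i\xi_k/|\xi|$ that are homogeneous of degree $0$, so $\mathbb P$ commutes with the dilation $x\mapsto \la x$ for every $\la>0$ and therefore preserves $(-1)$-homogeneity. Boundedness of $\mathbb P$ on $L^3_w$ (\cite[Ch.~5,~Thm.~3.15]{Stein}) and on $\Bp$ then gives $a\in L^3_w$ and $\|b\|_{\dot B_{p,\infty}^{3/p-1}}\le C\kappa\e_1$; choosing $\e_1$ small enough that $C\kappa\e_1<\e$ completes the proof.

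The main point where care is needed is the density step for $B_{p,p}^{3/p-1}(S^2)$, which crucially uses $p<\infty$; this is precisely why the lemma is restricted to finite $p$, mirroring the role of the $\la$-adic block $\dot\Delta_0 f\in L^p$ in Lemma \ref{lemma.profileslicing}.
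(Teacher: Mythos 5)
Your proof is correct and follows essentially the same route as the paper: restrict to $S^2$ via Cannone's Lemma, split the trace into a bounded piece plus a piece small in $B^{3/p-1}_{p,p}(S^2)$, extend both pieces by $(-1)$-homogeneity and control the small one with \eqref{ineq.cannone}, then finish with the Helmholtz projection exactly as in Lemma \ref{lemma.profileslicing}. The only cosmetic difference is that you invoke density of $C^\infty(S^2)$ in $B^{3/p-1}_{p,p}(S^2)$ (valid since $p<\infty$), whereas the paper performs the equivalent explicit Littlewood--Paley truncation on the sphere, keeping the finitely many blocks $j<J$ (which are bounded, hence give the $L^3_w$ part) and making the tail $j\ge J$ small.
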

\begin{proof}Let $A$ be a finite set.
Let $\{\Om_\al\}_{\al\in A}$ be an open cover of $S^2$.  Let $h_{\al}$ be a diffeomorphism from $\Om_\al$ to $\R^2$. Let $\{\phi_\al\}_{\al\in A}$ be a partition of unity of $S^2$ with $\supp \phi_{\al} \subset \Om_{\al}$.  
Then, $f|_{S^2}=\sum_{\al\in A,j\geq -1} \Delta_j (\phi_\al f)$ and $\|f|_{S^2}\|_{B_{p,p}^{3/p-1}(S^2)}\equiv \sum_{\al\in A} \|\phi_\al f\circ h^{-1}_\al  \|_{B_{p,p}^{3/p-1}(\R^2)}$.

Choose $J$ so that, letting 
\[
b_0=\sum_{\al\in A,j\geq J}  (\Delta_j (\phi_\al f \circ h_\al^{-1}))\circ h_\al,
\] 
we have $\| b_0\|_{B_{p,p}^{3/p-1}(S^2)}<\epsilon / \kappa$ (this is possible since the summation index is finite).  Let $a_0=f|_{S^2}-b_0$.
Extend  $a_0$ and $b_0$ to $a$ and $b$ by the $-1$-homogeneous scaling relationship. By \cite[Lemma 23.2]{LR}, $a+b=f$.  By \eqref{ineq.cannone},
\[\|b\|_{\Bp}\leq \kappa \|b_0\|_{B_{p,p}^{3/p-1}}\leq \epsilon.
\]
Furthermore, $a_0\in L^\I (S^2)$ and, therefore, $a\in L^3_w$. To conclude re-define $a$ and $b$ after applying the divergence free projector to each field as in the proof of Lemma \ref{lemma.profileslicing}.
\end{proof}

\begin{proof}[Proof of Theorem \ref{theorem.main2}] Assume $v_0$ is as in the statement of the theorem.  By Lemma \ref{lemma.profileslicing2} we can write $v_0=a_0+b_0$ where $a_0$ and $b_0$ are $-1$-homogeneous, $a_0\in L^3_w$, and $\|b_0\|_{\Bp}$ is smaller than the Koch-Tataru constant. 
Let $b$ be the self-similar Koch-Tataru solution evolving from $b_0$ with pressure $\pi_b$ and let $B(y)=b(x,1)$ under the self-similar change of variables.  To find a self-similar solution $v$ to \eqref{eq:NSE} with initial data $v_0$, we find a solution $a$ to the perturbed problem \eqref{eq.a}.  The corresponding self-similar profile $A$ is divergence free and satisfies the perturbed Leray equation
\[
-\Delta A -\frac 1 2 A-\frac 1 2 y\cdot\nabla A+A\cdot \nabla A +B\cdot\nabla A + A\cdot \nabla B +\nabla p = 0.
\]
Let $A_0$ be the solution to the heat equation with initial data $a_0$ and let $U_0(y)=A_0(x,1)$ under the self-similar change of variables \eqref{variables}.   Then, $U_0$ satisfies Assumption \ref{AU_0} for any $T>0$ by Lemma \ref{th:2.1}. Applying Lemma \ref{lemma:W} for $U_0$, $q=10/3$, and $\al=1/24$, gives a small asymptotic profile $W$.  If $A=U+W$, then $U$ is divergence free and satisfies 
\[
-\Delta U -\frac 1 2 U-\frac 1 2 y\cdot\nabla U +(U+W)\cdot\nabla U  + U\cdot\nabla B +B\cdot\nabla U +U\cdot \nabla W  +\nabla p=-\mathcal R (W),
\]
where
\[
\mathcal R (W)=-\Delta W-\frac 1 2 W-\frac 1 2 y\cdot \nabla W + W\cdot\nabla W+B\cdot\nabla W+W\cdot\nabla B.
\]
We now construct such a $U$ using a Galerkin scheme.
Let $\{ a_k \}\subset \mathcal V$ be an orthonormal basis of $H$.  For $k \in \mathbb N$, the approximating solution \[
U_k(y)=\sum_{i=1}^k b_{ki}a_i(y),
\] is required to satisfy
\begin{align}\label{eq:stationaryODE}
 & \sum_{i=1}^k A_{ij}b_{ki} +\sum_{i,l=1}^k B_{ilj} b_{ki}b_{kl} +C_j=0,%
\end{align}
for $j\in \{1,\ldots,k\}$,
where
\begin{align}
\notag A_{ij}&=- (\nabla a_{i},\nabla a_j) 
		+ \frac 1 2(a_i+y\cdot \nabla a_i, a_j) 
		 -(    a_i\cdot \nabla (W+B),a_j)
		- ((W+B)\cdot\nabla a_i, a_j)
\\\notag B_{ilj}&=- (a_i \cdot\nabla a_l, a_j)
\\\notag C_j&=-\langle \mathcal R (W),a_j\rangle.
\end{align}
Let $P(x):\R^k\to\R^k$ denote the mapping  
\[
P(x)_j=\sum_{i=1}^k A_{ij}x_{i} +\sum_{i,l=1}^k B_{ilj} x_{i}x_{l} +C_j.
\]
For $x\in \R^k$, let $\xi=\sum_{j=1}^k x_j a_j$.  We have 
\EQ{
\label{eq4.6}
P(x)\cdot x &= -\frac 1 4 ||\xi||_{L^2}^2
	 -   ||\nabla \xi||_{L^2}^2 +(\xi \cdot \nabla \xi,W+B) - \bka{\cR(W),\xi}
\\
& \le  -\frac 1 8 ||\xi||_{L^2}^2  -   \frac 1 2 ||\nabla \xi||_{L^2}^2 +C_*^2 \norm{\cR(W)}_{H^{-1}}^2
\\
& \le  -\frac 1 8 |x|^2 + C_*^2 \norm{\cR(W)}_{H^{-1}}^2,
}
using the smallness of $\norm{W}_{L^\I}$ and $\norm{B}_{L^\I}$, as well as the estimates \eqref{ineq.sourcetermbound} and \eqref{est.RW2} for $\mathcal R(W)$ in Section \ref{sec.DSS}. We conclude that
\[
P(x)\cdot x< 0,\quad \text{if } |x|=\rho := 4C_* \norm{\cR(W)}_{H^{-1}}.
\]
By Brouwer's fixed point theorem, there is one $x$ with $|x|<\rho$ such that $P(x)=0$. Then $U_k=\xi$ is our approximation solution satisfying
\eqref{eq:stationaryODE}. 
By the first inequality of \eqref{eq4.6} and $P(x)=0$, $U_k$ also satisfies
the  \emph{a priori} bound
\[
 \norm{U_k}_{L^2}^2 + \norm{\nabla U_k}_{L^2} ^2\le 8C_*^2 \norm{\cR(W)}_{H^{-1}}^2.
\]
This bound is sufficient to find a subsequence with a weak limit in $H^1(\R^3)$ and a strong limit in $L^2(K)$ for any compact set $K$ in $\R^3$ -- i.e.~there exists a solution $U$ with $U\in H^1(\R^3)$. We now obtain $A$ by setting $A=U+W$.  Note that $A\in H^1_{\mathrm{loc}}\cap L^q$ for $3< q\leq  6$,  
and, following \cite[pp. 287-288]{NRS} or \cite[pp. 33-34]{Tsai-ARMA}, if we define
\[
p = \sum_{i,j} R_i R_j (A_i A_j),
\]
where $R_i$ denote the Riesz transforms,
then $(A,p)$ solve the perturbed stationary Leray system in the distributional sense.
To obtain a solution to \eqref{eq:NSE}, pass from the self-similar profile $A$ to the field $a$ at time $t=1$ using the change of variable \eqref{variables} and extend $a$ to all times using the ansatz \eqref{ansatz1}.  Also do this for the pressure; let $\pi_a$ be self-similar extension of the image of $p$ under the change of variables \eqref{variables}. Finally, let $v=a+b$ and $\pi=\pi_a+\pi_b$.   
\end{proof}

\section{Relationships between function spaces}\label{Appendix}

In this section we state and prove lemmas clarifying the relationships between several function spaces. The first two lemmas give examples of $2$-DSS vector fields in $\Bp$ that are not in other spaces. They ensure that Theorem \ref{theorem.main} is new in comparison to Theorem \ref{thrm.old},  \cite[Theorem 16.3]{LR2}, and \cite{Chae-Wolf}.

\begin{lemma}\label{lemma.strictembedding}For any $p,q\in (3,\infty)$ with $q<p$, there exists a $2$-DSS function $f$ belonging to $\Bp \setminus \dot B_{q,\infty}^{3/q-1}$.  In particular $f\in \Bp\setminus L^3_w$. 
\end{lemma}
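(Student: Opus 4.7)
The plan is to apply Cannone's Lemma (Lemma \ref{lemma.cannone}) to reduce the construction to a strict Besov embedding question on the sphere $S^2$. Every $(-1)$-homogeneous function is automatically $2$-DSS, so it suffices to produce a $(-1)$-homogeneous distribution $f$ in $\Bp \setminus \dot B^{3/q-1}_{q,\infty}$. Cannone's Lemma applied with exponents $p$ and $q$ translates this into finding $h$ on $S^2$ with $h \in B^{3/p-1}_{p,p}(S^2)\setminus B^{3/q-1}_{q,q}(S^2)$.

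Such $h$ exists because of the strict Besov embedding on the two-dimensional manifold $S^2$: the inclusion $B^{s_1}_{q,q}\hookrightarrow B^{s_2}_{p,p}$ requires $s_1 - 2/q \geq s_2 - 2/p$, and with $s_i = 3/i - 1$ the margin is $1/q - 1/p > 0$, so the embedding is strict. I would exhibit $h$ concretely by working in a single chart and using a lacunary construction: take a Schwartz $\psi$ on $\R^2$ whose Fourier transform is supported in a fixed dyadic annulus (so that $\dot\Delta_k$ isolates each summand), and set $g = \sum_{k\geq 1} 2^{k(1-1/q)} \psi(2^k \cdot)$. A direct scaling computation yields $2^{k(3/r-1)}\|\dot\Delta_k g\|_{L^r(\R^2)} \asymp 2^{k(1/r - 1/q)}$, which is summable in $\ell^p$ (exponent $1/p - 1/q < 0$) but a constant sequence when $r = q$, hence not in $\ell^q$. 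So $g$ lies in $B^{3/p-1}_{p,p}(\R^2)\setminus B^{3/q-1}_{q,q}(\R^2)$, and cutting off by a partition-of-unity bump before pulling back through the chart produces the required $h$ on $S^2$.

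Setting $f(x) = |x|^{-1} h(x/|x|)$ (times a fixed unit direction vector if a vector field is desired) gives a $(-1)$-homogeneous distribution; Cannone's Lemma then places it in $\Bp\setminus \dot B^{3/q-1}_{q,\infty}$, and it is tautologically $2$-DSS. The ``in particular'' statement $f\notin L^3_w$ is immediate from the inclusion $L^3_w\subset \dot B^{3/q-1}_{q,\infty}$ for $q>3$, as recorded in the introduction.

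The main technical step is the lacunary norm computation combined with the verification that the chart/partition-of-unity operations preserve membership in (and exclusion from) the relevant Besov spaces. These are routine via the definition of Besov norms on $S^2$ through atlases and the invariance of $B^s_{p,q}$ under smooth diffeomorphisms of bounded type, but they are the most tedious part of the argument.
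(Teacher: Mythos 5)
Your proof is correct, but it follows a genuinely different route from the paper's. The paper never leaves $\R^3$: it builds a wavelet series, placing coefficients $n^{-1/q}$ on the scale-zero wavelets $\psi_{0,\hat n}$, $\hat n=(n,0,0)$, extending to all scales by the DSS coefficient relation of Lemma \ref{lemma.scaletoscale}, and then reading off both membership and non-membership from the wavelet characterization \eqref{norm-equiv}: each scale contributes the same quantity $\big(\sum_n n^{-p/q}\big)^{1/p}$, which is finite precisely when $p>q$. You instead reduce to the sphere via Cannone's Lemma (Lemma \ref{lemma.cannone}), and your lacunary computation on $\R^2$ is right: $2^{k(3/r-1)}\|\dot\Delta_k g\|_{L^r(\R^2)}\asymp 2^{k(1/r-1/q)}$, which is $\ell^p$-summable for $r=p$ (negative exponent) and a nonzero constant sequence for $r=q$, hence not in $\ell^q$; note this gives exactly the third index $q$ that Cannone's Lemma requires on the sphere. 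Since $(-1)$-homogeneous functions are $2$-DSS, this proves the lemma as stated, and in fact something slightly stronger: the gap $\Bp\setminus\Bq$ is realized already by \emph{self-similar} data. What you give up is a genuinely discretely self-similar (non-homogeneous) example, and the wavelet machinery that the paper sets up here and reuses for Lemmas \ref{lemma.notsquareintegrable} and \ref{lemma.BMOinversenotBesov}. Two of your ``routine'' steps deserve explicit mention: (i) non-membership must survive the cutoff, i.e., you need $(1-\chi)g\in B^{3/q-1}_{q,q}(\R^2)$; this does hold because away from the origin every term of the lacunary sum and all its derivatives are $O\big((2^k|x|)^{-N}\big)$, so the cutoff error lies in every Besov space, but without this observation multiplication by $\chi$ could a priori cancel the divergence; (ii) the homogeneous extension $f(x)=|x|^{-1}h(x/|x|)$ of a distribution on $S^2$ should be justified by \cite[Lemma 23.2]{LR}, exactly as the paper does in the proof of Lemma \ref{lemma.profileslicing2} --- harmless here, since your $g$ is locally integrable.
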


\begin{lemma}
\label{lemma.notsquareintegrable}There exists a $2$-DSS vector field in $\Bp\setminus L^2_{\mathrm{loc}}$ whenever $p>3$.
\end{lemma}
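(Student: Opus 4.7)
The plan is to construct a scalar $2$-DSS tempered distribution $f \in \Bp \setminus L^2_{\loc}(\R^3)$ and take the vector field $\vec f = f\,e_1$ as the required example. The key point is that for $p>3$ the Besov regularity index $3/p-1$ is negative, so $\Bp$ contains distributions which fail to be locally $L^2$. The strategy is to manufacture such an object supported on a single annulus, and then periodize it using $2$-DSS scaling, in a manner parallel in spirit to the construction in Lemma~\ref{lemma.profileslicing}.

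First I would fix $\chi\in C_c^\infty(\R^3)$ supported in $A=\{1\le|x|\le 2\}$ with $\chi\not\equiv 0$, and set
\[
F = \sum_{j\ge 1}\chi(x)\cos(2^j x_1),
\]
interpreted as a tempered distribution (convergence in $\mathcal S'$ is immediate from Riemann--Lebesgue, since pairings with test functions decay rapidly in $j$). Each summand $f_j:=\chi\cos(2^j x_1)$ has Fourier transform concentrated near $\pm 2^j e_1$, so under the $2$-adic Littlewood--Paley decomposition $\dot\De_j F\approx f_j$ and
\[
\|F\|_{\Bp} \lesssim \sup_{j\ge 1} 2^{j(3/p-1)}\|\chi\|_{L^p} < \infty
\]
since $3/p-1<0$. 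On the other hand, if $F$ were represented by an $L^2$ function on $A$, its Fourier transform $\hat F=\tfrac{1}{2}\sum_j[\hat\chi(\xi-2^j e_1)+\hat\chi(\xi+2^j e_1)]$ would be in $L^2(\R^3)$; but the bumps $\hat\chi(\cdot\mp 2^j e_1)$ are essentially disjoint, so their $L^2$ norms add up to $\sum_j\|\hat\chi\|_{L^2}^2=\infty$, a contradiction. Hence $F\in\Bp\setminus L^2_{\loc}$.

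Next I would define
\[
f(x) = \sum_{k\in\Z} 2^k F(2^k x),
\]
supported on the pairwise disjoint annuli $\{2^{-k}\le|x|\le 2^{-k+1}\}$. Direct calculation gives $f(2x)=\tfrac{1}{2}f(x)$, making $f$ a $2$-DSS distribution. The DSS Fourier identity~\eqref{DSS.Fourier} collapses the Besov norm to $\|f\|_{\Bp}=\|\dot\De_0 f\|_{L^p}$. Because $\hat F$ is supported in $\{|\xi|\ge 2\}$, only summands with $k\le-1$ contribute substantially to $\dot\De_0 f$, and for each such $k$ only the single frequency $j=-k$ in the expansion of $F$ is in resonance with $\dot\De_0$; a scaling computation then yields $\|\dot\De_0[2^k F(2^k\cdot)]\|_{L^p} \lesssim 2^{k(1-3/p)}\|\chi\|_{L^p}$. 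Since $1-3/p>0$ and the pieces have essentially disjoint spatial supports, the sum over $k\le-1$ converges as a geometric series in $L^p$, giving $f\in\Bp$. On $A$ only the $k=0$ summand is nonzero, so $f|_A=F|_A\notin L^2$, and therefore $\vec f:=f\,e_1\in\Bp\setminus L^2_{\loc}$. The main technical step will be the last frequency-localized estimate, which requires careful tracking of how the Fourier supports of the rescaled pieces interact with $\dot\De_0$ and then combining the resulting contributions across scales using quasi-orthogonality.
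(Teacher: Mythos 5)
Your construction is correct in outline, and it takes a genuinely different route from the paper's. The paper works entirely on the wavelet side: it expands in a compactly supported, orthonormal Meyer wavelet basis, encodes the $2$-DSS property as the coefficient relation of Lemma~\ref{lemma.scaletoscale}, chooses $\al_{0,k}=|k|^{-1}$ (which lies in $\ell^p(\Z^3)$ precisely when $p>3$, giving the $\Bp$ bound via the wavelet characterization of the norm), and obtains the failure of $L^2_{\loc}$ by Parseval on the unit annulus: the scale-$j$ wavelets supported there number $\sim 2^{3j}$ and each coefficient has size $\sim 2^{-3j/2}$, so every scale contributes $\sim 1$ to the local $L^2$ norm. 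Because the wavelets are exactly localized in space and orthonormal, there are no tails anywhere in that argument. Your lacunary block $F=\chi\sum_{j\ge 1}\cos(2^jx_1)$, periodized over dyadic scales, buys elementary ingredients (no wavelet theory) and a very transparent DSS structure and $L^2$ failure, but pays in Fourier tails: (i) the statement that $\hat F$ is supported in $\{|\xi|\ge 2\}$ is literally false, since $F$ is compactly supported and hence $\hat F$ is analytic with full support; it must be replaced by ``concentrated near $\pm 2^je_1$ with rapidly decaying tails''; (ii) both the ``essentially disjoint bumps'' step behind $F\notin L^2$ and the resonance estimate $\|\dot\De_0[2^kF(2^k\cdot)]\|_{L^p}\lec 2^{k(1-3/p)}\|\chi\|_{L^p}$ require almost-orthogonality arguments exploiting the rapid decay of $\hat\chi$ (off-resonance contributions decay like $2^{-N|j+k|}$ and sum), which you defer as ``the main technical step.'' These are standard and fillable, so I do not count them as gaps, but they are exactly the work the paper's wavelet choice was designed to eliminate. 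Two refinements for a final write-up: take $\supp\chi$ compactly inside the \emph{open} annulus $\{1<|x|<2\}$, so that ``$F$ restricted to the open annulus lies in $L^2$'' genuinely forces $F\in L^2(\R^3)$ (ruling out interference from distributions supported on the boundary spheres) and so that a cutoff $\phi\equiv 1$ near $\supp\chi$ gives $\phi f=F$ exactly; and note that the intermediate claim $F\in\Bp$ can simply be dropped, since once $f$ is $2$-DSS only the two facts $\|\dot\De_0 f\|_{L^p}<\infty$ and $F\notin L^2$ enter the proof.
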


The last lemma is included for illustrative purposes.

\begin{lemma}\label{lemma.BMOinversenotBesov}
There exists a $2$-DSS vector field in $BMO^{-1}\setminus \Bp$ whenever $0<p<\I$.
\end{lemma}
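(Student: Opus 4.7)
The plan is to exhibit the very explicit example
\[
f(x) \;=\; e_1\, \mathrm{p.v.}\!\left(\frac{1}{x_1}\right),\qquad x=(x_1,x_2,x_3)\in\R^3,
\]
which is a tempered distribution and is $(-1)$-homogeneous, hence $\la$-DSS for every $\la>1$; in particular $f$ is $2$-DSS. The two nontrivial claims are then $f \in BMO^{-1}(\R^3)$ and $f \notin \Bp$ for every $p \in (0,\infty)$.

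For the first I would write $f = \partial_{x_1}g$ with $g(x) = \log|x_1|\,e_1$, and verify $g \in BMO(\R^3;\R^3)$; by definition of $BMO^{-1}$ (as divergences of $BMO$ tensor fields) this gives $f \in BMO^{-1}$. The containment $\log|x_1| \in BMO(\R^3)$ reduces by Fubini to the classical fact $\log|s| \in BMO(\R)$: for any ball $B_R(x_0)\subset \R^3$, the marginal of normalized Lebesgue measure on the $x_1$-axis has a semicircular weight comparable to the uniform measure on $[x_0^1-R,\,x_0^1+R]$, so the three-dimensional mean oscillation of $\log|x_1|$ over $B_R(x_0)$ is controlled, uniformly in $R$ and $x_0$, by the one-dimensional $BMO$ seminorm of $\log|s|$.

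For the second, I would pass to Fourier space. Using the standard identity $\mathcal F(\mathrm{p.v.}(1/s))(\xi) = -i\pi\,\mathrm{sgn}(\xi)$ together with $\mathcal F(1) = 2\pi\,\delta$ in each of the two remaining variables,
\[
\hat f(\xi) \;=\; c\, e_1\, \mathrm{sgn}(\xi_1)\,\delta(\xi_2)\,\delta(\xi_3),
\]
for a nonzero constant $c$. The essential observation is that $\hat f$ is supported on the one-dimensional $\xi_1$-axis. Consequently, for every $j \in \Z$ the dyadic block $\dot\Delta_j f$ is a smooth tempered distribution depending only on $x_1$, namely the one-dimensional inverse Fourier transform of $c\,\phi_j(\xi_1,0,0)\,\mathrm{sgn}(\xi_1)$. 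This function of $x_1$ does not vanish identically, since $\phi_j$ is positive somewhere on the $\xi_1$-axis at scale $2^j$ by the standing choice of $\phi$. But a nonzero function depending only on $x_1$ satisfies
\[
\norm{\dot\Delta_j f}_{L^p(\R^3)} \;=\; \norm{\dot\Delta_j f}_{L^p(\R_{x_1})}\cdot \norm{1}_{L^p(\R^2_{x_2,x_3})} \;=\; \infty
\]
for every $p<\infty$, which forces $\norm{f}_{\Bp} = \infty$.

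There is no substantive obstacle; the two small points that require brief justification are that $\mathrm{p.v.}(1/x_1)$ is a bona fide tempered distribution on $\R^3$ (as the tensor product of the one-dimensional principal value with $1\otimes 1$) and the lifting of $BMO$ from one to three dimensions for functions depending on a single coordinate. Both facts are standard.
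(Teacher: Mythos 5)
Your argument is correct, and it takes a genuinely different route from the paper's. The paper stays inside the wavelet framework of Section \ref{Appendix}: it places coefficients $\al_{j,k}=2^{-(j-1)/2}$ on the eight dyadic cubes at scale $j$ touching the points $(3\cdot 2^{j-1},0,0)$, takes the $2$-DSS extension via Lemma \ref{lemma.scaletoscale}, observes that each scale-$j$ piece $f_j$ is bounded but repeats along the positive $x_1$-axis (hence $f_j\in L^\infty\setminus L^p$, giving $f\in\dot B^{-1}_{\infty,\infty}\setminus\Bp$), and then verifies $f\in BMO^{-1}$ by a discrete Carleson-measure computation on the wavelet coefficients. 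You instead take the explicit distribution $f=e_1\,\mathrm{p.v.}(1/x_1)$, obtain $BMO^{-1}$ membership from $f=\partial_{x_1}\bke{\log|x_1|}\,e_1$ with $\log|x_1|\in BMO(\R^3)$, and exclude $\Bp$ because $\hat f$ is supported on the $\xi_1$-axis, which forces every block $\dot\De_j f$ to be a nonzero (Schwartz) function of $x_1$ alone and hence to have infinite $L^p(\R^3)$ norm for every $p<\infty$. Both proofs rest on the same mechanism---total absence of decay in the $(x_2,x_3)$ directions---but yours is more elementary and delivers strictly more: your $f$ is $(-1)$-homogeneous, hence self-similar and $\la$-DSS for \emph{every} $\la>1$, not merely $2$-DSS, and replacing $e_1$ by $e_2$ would even make it divergence free, which the wavelet example is not. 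What the paper's route buys is uniformity with the rest of Section \ref{Appendix} (Lemma \ref{lemma.scaletoscale} and the coefficient characterization of $\bp$ are already in place for Lemmas \ref{lemma.strictembedding} and \ref{lemma.notsquareintegrable}) and an example whose failure to lie in $\Bp$ comes instead from isolated $|x|^{-1}$-type singularities marching along a ray. Two details you should write out: homogeneous Besov spaces are defined modulo polynomials, but $\dot\De_j$ annihilates polynomials, so no representative of your $f$ has finite $\Bp$ norm; and the nonvanishing $\dot\De_j f\neq 0$ follows since $\phi_j(2^j e_1)=\phi(e_1)=1$ under the paper's normalization of $\chi$.
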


Each of these lemmas is proved by constructing explicit examples starting with a wavelet basis. We recall the essentials about wavelets.  
Meyer constructed wavelets in \cite[p. 108]{Meyer}.  In particular, there exists a family of functions $\{\psi_{\e,j,k}\}_{\e=1,\ldots,7;j\in \Z;k\in \Z^3}$ so that
\begin{enumerate}
\item they are generated from given functions $\psi_\e$ for $\e=1,\ldots,7$ by
\[
	\psi_{\e,j,k}(x)=2^{3j/2}\psi_\e(2^j x-k),
\]
\item they constitute an orthonormal basis of $L^2(\R^3)$,
\item they are compactly supported in dyadic cubes, in particular, for $k=(k_1,k_2,k_3)$,
\[
\operatorname{supp}	\psi_{\e,j,k} \subset  \big[2^{-j}k_1, 2^{-j}(k_1+1)\big]\times \big[2^{-j}k_2, 2^{-j}(k_2+1)\big]\times \big[2^{-j}k_3, 2^{-j}(k_3+1)\big].
\]
\end{enumerate} 
Moreover the wavelets can be taken with arbitrarily high regularity, with enlarged compact support.
The parameter $\e$ plays no role in what follows and is consequently suppressed.

Assume $1\leq p\leq \infty$ and $f$ is a distribution given by
\begin{equation}\label{wavelet-series}
 f = \sum_{j,k}\alpha_{j,k}\psi_{j,k},
\end{equation}
with convergence understood in the space of tempered distributions $\mathcal S'$.
Then, $f\in \Bp$ if and only if
\[
\|f\|_{\bp }:=\sup_{j\in \Z} 2^{j/2} \bigg( \sum_{k} |\alpha_{j,k}|^p   \bigg)^{1/p}<\infty,
\]for some sequence of wavelet coefficients $\alpha_{j,k}$ (see \cite[Proposition 6]{Cannone-handbook} and \cite[p.~200]{Meyer}), and, moreover,
\begin{equation}\label{norm-equiv}
 \|f\|_{\Bp} \sim \|f\|_{\bp}.
\end{equation}
For  $f\in \Bp$, the coefficients in the series \eqref{wavelet-series} are uniquely determined since $\alpha_{j,k} = \langle \psi_{j,k},f \rangle$.

Our first lemma describes the relationship between different scales in a discretely self-similar function.  This is essentially a wavelet version of the relationship $ \dot \Delta_j f(x)=2^{j-i}\dot \Delta_i f(2^{j-i} x)$ for every $i,j\in \Z$, which we saw in Section \ref{sec.technical}.

\begin{lemma}\label{lemma.scaletoscale}
Let $f$ be a tempered distribution and let 
\[
f_j=\sum_{k\in \Z^3} \alpha_{j,k}\psi_{j,k},
\]
where $\{\psi_{j,k}\}$ is a $2$-regular wavelet basis and $\alpha_{j,k}=\langle \psi_{j,k},f\rangle$ for all $j\in \Z$ and $k\in \Z^3$ so that
$f=\sum_{j\in \Z}f_j$.  The following are equivalent:
\begin{itemize}
\item[i.] $f$ is $2$-DSS,
\item[ii.]	$ f_j(x)=2^{j-i} f_i(2^{j-i} x)$ for every $i,j\in \Z$,
\item[iii.] $ \alpha_{j,k} = 2^{-(j-i)/2} \alpha_{i,k}$ for every $i,j\in \Z$.
\end{itemize} 
\end{lemma}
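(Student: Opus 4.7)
The strategy is to establish a cyclic chain (i) $\Rightarrow$ (iii) $\Rightarrow$ (ii) $\Rightarrow$ (i). The entire argument is powered by a single identity derived from the scaling relation $\psi_{j,k}(x) = 2^{3j/2}\psi(2^j x - k)$, namely
\begin{equation*}
\psi_{j,k}(x/2) = 2^{3/2}\,\psi_{j-1,k}(x),
\end{equation*}
which says that dilating a wavelet by a factor of $2$ merely shifts it one scale coarser, up to an explicit multiplicative constant. This identity is what links the three conditions together.

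For (i) $\Rightarrow$ (iii), I will interpret the $2$-DSS condition for the tempered distribution $f$ distributionally, namely $f(\varphi) = 2^{-2} f(\varphi(\cdot/2))$ for every Schwartz test function $\varphi$. Specializing $\varphi = \psi_{j,k}$ and applying the scaling identity above collapses this to the one-step recursion $\alpha_{j,k} = 2^{-1/2}\alpha_{j-1,k}$, which iterates to $\alpha_{j,k} = 2^{-(j-i)/2}\alpha_{i,k}$ for all $i,j\in\Z$.

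For (iii) $\Rightarrow$ (ii), I will substitute the coefficient relation directly into $f_j = \sum_{k} \alpha_{j,k}\psi_{j,k}$ and match it against $2^{j-i}f_i(2^{j-i}\,\cdot)$; tracking the exponents of $2$ (using the explicit form of $\psi_{j,k}$ and $\psi_{i,k}$) shows that the two expressions agree term-by-term in $k$. For (ii) $\Rightarrow$ (i), I will use the given decomposition $f = \sum_j f_j$ in $\mathcal S'$ together with (ii) applied with the index pair $(i,j) = (j+1, j)$, which gives $f_j(2x) = 2^{-1} f_{j+1}(x)$. Summing over $j\in\Z$ and relabeling the index yields $2 f(2x) = f(x)$ in $\mathcal S'$, i.e., the $2$-DSS condition.

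I do not anticipate any substantive obstacle. The one place requiring a small amount of care is the correct distributional interpretation of the dilation $f(2\,\cdot)$ in the step (i) $\Rightarrow$ (iii); once the duality convention is fixed, everything reduces to bookkeeping of dyadic scaling factors. Convergence issues for the wavelet expansion are absent because $f = \sum_j f_j$ in $\mathcal S'$ is part of the hypothesis.
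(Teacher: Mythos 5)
Your proposal is correct and follows essentially the same route as the paper: the same cyclic chain (i)~$\Rightarrow$~(iii)~$\Rightarrow$~(ii)~$\Rightarrow$~(i), driven by the same wavelet scaling identity $\psi_{j,k}(x)=2^{3(j-i)/2}\psi_{i,k}(2^{j-i}x)$, with the same substitution for (iii)~$\Rightarrow$~(ii) and the same choice $i=j+1$ plus relabeling for (ii)~$\Rightarrow$~(i). Your only deviation is cosmetic: you phrase (i)~$\Rightarrow$~(iii) via the duality pairing $f(\varphi)=2^{-2}f(\varphi(\cdot/2))$, which is a slightly more careful rendering of the change-of-variables computation the paper writes with formal integrals.
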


\begin{proof}[Proof of Lemma \ref{lemma.scaletoscale}] Note that
\begin{align}\label{psijtoi}\psi_{j,k}(x)
&=2^{3j/2}\psi(2^jx-k)
=2^{3j/2}\psi( 2^i 2^{j-i}x-k)
=2 ^{3(j-i)/2}\psi_{i,k}(2^{j-i}x).
\end{align} 

(\emph{i.$\implies$iii.}) Assume $f$ is $2$-DSS and let $i,j\in \Z$.  By the uniqueness of wavelet coefficients and \eqref{psijtoi}  we have
\[\alpha_{j,k} 
= \int \psi_{j,k}(y) f(y)\,dy
= 2^{3(j-i)/2} \int \psi_{i,k}(2^{j-i}y)f(y)\,dy.
\]
Since $f$ is $2$-DSS we have
\EQN{
 \int \psi_{i,k}(2^{j-i}y)f(y)\,dy
& = 
 \int \psi_{i,k}(2^{j-i}y)2^{j-i}f(2^{j-i}y)\,dy
\\
& = 2^{-2(j-i)} \int \psi_{i,k}(z)f(z)\,dz
 = 2^{-2(j-i)} \al_{i,k},
}
where we have set $z=2^{j-i}y$.
Therefore,
\[
\al_{j,k}=2^{-(j-i)/2} \al_{i,k}.
\]

(\emph{iii.$\implies$ii.}) Assume $\al_{j,k}=2^{-(j-i)/2} \al_{i,k}$ for all $i,j\in \Z$.  Then, 
\begin{align*}
  f_j(x)&=\sum_{k\in\Z^3} \al_{j,k}\psi_{j,k}(x)
  =\sum_{k\in \Z^3} 2^{-(j-i)/2} \al_{i,k} 2^{3(j-i)/2}\psi_{i,k}(2^{j-i}x)=
2^{j-i} f_i(2^{j-i} x)  ,
\end{align*}
where we have used \eqref{psijtoi}.

(\emph{ii.$\implies$i.}) Assume $ f_j(x)=2^{j-i} f_i(2^{j-i} x)$ for every $i,j\in \Z$.  Fix $j\in \Z$ and let $i=j+1$.  Then
\[
f_j(2 x) = 2^{j-i}f_i(  2^{j-i+1} x)=2^{-1} f_i(x).
\]
Then,
\[2 f(2 x)=  2 \sum_{j\in \Z} f_j(2 x) = \sum_{i\in \Z} f_i(x)=f(x),
\]
implying $f$ is $2$-DSS.
\end{proof}

\begin{proof}[Proof of Lemma \ref{lemma.strictembedding}] Assume $q\in (3,\I]$.
For $n\in \N$, let $\hat n=(n,0,0)$. Let 
\[ f_0= \sum_{n\in \N} n^{-1/q} \psi_{0,\hat n}.\] 
Let $f_j(x)=2^j f_0(2^j x)$ and let $f(x)=\sum_j f_j(x)$.  Then, $f$ is $2$-DSS by Lemma \ref{lemma.scaletoscale}.  Also by Lemma \ref{lemma.scaletoscale} we have
\[
2^{j/2}\bigg(\sum_{n\in \N} |\al_{j,\hat n}|^p\bigg)^{1/p} =\bigg(\sum_{n\in \N} |\al_{0,\hat n}|^p\bigg)^{1/p}=\bigg(\sum_{n\in \N} n^{-p/q}\bigg)^{1/p}.
\]
If $p>q$, then $f\in \Bp$.  If $p=q$ then the above series diverges.  Thus $f\notin \dot B^{3/q-1}_{q,\infty}$ and, since $L^3_w\subset \dot B_{q,\infty}^{3/q-1}$, $f \notin L^3_w$.
\end{proof}

\begin{proof}[Proof of Lemma \ref{lemma.notsquareintegrable}] 
As in the proof of Lemma \ref{lemma.strictembedding}, we first construct $f_0$ and then extend it to a $2$-DSS vector field using Lemma \ref{lemma.scaletoscale}.    If $|k|\geq 2$ then let $\al_{0,k}=|k|^{-1}$.  Let $\al_{0,k}=0$ for $|k|<2$.   Define $f$ using Lemma \ref{lemma.scaletoscale}.
Then $f\in \Bp$ because $\{ \al_{0,k} \}\in l^p(\Z^3)$ provided $p>3$.

It remains to show that $f\notin L^2_{\mathrm{loc}}$.  Let $A_1=\{x:1\leq |x|\leq 2   \}$.   Let $\phi\in C^\I$ be non-negative, supported on $A_1^*=\{x:2^{-1}\leq |x| \leq 4 \}$, and equal $1$ on $A_1$.  Let $S_j=\{ k: \supp \psi_{j,k}\subset A_1 \}$.  Note that $|S_j|\sim 2^{3j}$  {for $j \gg 1$}.  If
\[
\phi f = \sum \be_{j,k}\psi_{j,k},
\]
then $\be_{j,k}=\al_{j,k}$ whenever $\supp \psi_{j,k}\subset A_1$.  Since we are working with an orthonormal basis we have
\[
\int_{\R^3} (\phi f)^2\,dx = \sum_{j\in \Z} \sum_{k\in \Z^3} |\be_{j,k}|^2 \geq \sum_{j\in \N} \sum_{k\in S_j} |\al_{j,k}|^2.
\]
Note that if $k\in S_j$ then $|k|\sim 2^{j}$. So, $\al_{0,k}=|k|^{-1}\sim 2^{-j}$ for all $k\in S_j$. Using Lemma \ref{lemma.scaletoscale} we have
\begin{align*} \sum_{j\in \N}\sum_{k\in S_j} \al_{j,k}^2
= \sum_{j\in \N} \sum_{k\in S_j}   2^{-j} \al_{0,k}^2
\sim  \sum_{j\in \N} 2^{3j}  2^{-j} 2^{-2j}=\I.
\end{align*} 
Hence $\phi f\notin L^2_{\mathrm{loc}}$ and, since $|\phi f|\leq |f|$, neither is $f$.
\end{proof}
 
\begin{remark}
More can be said, in particular the function $f$ constructed above does not belong to $L^q(A_1)$ for any $q\in (1,\I)$.  This is clear when $q\in (2,\I)$ by H\"olders inequality.  For $q\in (1,2)$ we can use the fact that $L^{q}$ embeds continuously in $\dot B^{0}_{q,2}$ (see \cite[Theorem 2.40]{BCD}) and adapt the above argument to show that $\phi f\notin \dot B^{0}_{q,2}$, i.e.
\[
 \sum_{j\in \Z} \bigg(  2^{(3/2-3/q)j} \bigg(\sum_{k\in \Z^3} |\be_{j,k}|^q  \bigg)^{\frac 1 q} \bigg)^{2} =\I.
 \]
\end{remark}
 
\begin{proof}[Proof of Lemma \ref{lemma.BMOinversenotBesov}]

We construct a $2$-DSS vector field $f$ which belongs to $BMO^{-1}\setminus \Bp$.  This field is similar to the one discussed in remark (4) following \cite[Theorem 1.2]{BT1}. 
Let $A_j$ equal the collection of $k\in \Z^3$ so that the cube $Q_{j,k}$ is touching the point $k_j=(3(2^{j-1}),0,0)$.  Then,
\begin{align*}
A_j=\{&(3(2^{j-1}) ,0,0 )  ,(3(2^{j-1}) ,-1, 0) ,
\\&(3(2^{j-1}) ,0,-1 ) ,(3(2^{j-1}) ,-1, -1) ,
\\&(3(2^{j-1})-1 ,0,0 )  ,(3(2^{j-1})-1 ,-1, 0) ,
\\&(3(2^{j-1}) -1,0,-1 ) ,(3(2^{j-1})-1 ,-1, -1) \}.
\end{align*}
For all $k\in A_j$ let $\al_{j,k}=2^{-(j-1)/2}$ and let $f$ be the $2$-DSS extension of 
\[
\sum_{j\in\N}\sum_{k\in A_j} \al_{j,k}\psi_{j,k}.
\]
Let $f_j=\sum_{k\in \Z^3} \al_{j,k} \psi_{j,k}$. Then, $\supp f_j \subset \R^3\setminus B_{2^{-j}}(0)$ and $f_j$ repeats along the positive $x_1$-axis. Hence $f_j\in L^\I\setminus L^p$ for all $p$ and, since $f$ is $2$-DSS,  $f\in \dot B_{\I,\I}^{-1}\setminus \Bp$ for all $p<\I$.  The function $f$ is singular at the points $k_j$ and each singularity is of order $|x|^{-1}$.

With a little work we can also show that $f\in BMO^{-1}$. 
Recall 
\[
\|f\|_{BMO^{-1}} = \sup_{Q} \frac 1 {|Q|} \sum_{Q_{j,k}\subset Q} (2^{-j}|\al_{j,k}|)^2.
\]
Since $f$ is $2$-DSS we have by Lemma \eqref{lemma.scaletoscale} that if $|Q|\sim 2^{-3J}$, then
\begin{align*}
\sup_{Q} \frac 1 {|Q|} \sum_{Q_{j,k}\subset Q} (2^{-j}|\al_{j,k}|)^2
&=\frac 1 {2^{-3J}}\sum_{Q_{j,k}\subset Q} 2^{-2j}|\al_{j,k}|^2
\\&=\frac 1 {2^{-3J}}\sum_{Q_{i,k}\subset Q_0} 2^{-2(i+J)}|\al_{i+J,k}|^2
\\&=\frac 1 {2^{-3J}}\sum_{Q_{i,k}\subset Q_0} 2^{-3J}2^{-2i}|\al_{i,k}|^2,
\end{align*}
where $|Q_0|\sim 1$.   Thus the $BMO^{-1}$ norm is determined by taking the supremum over cubes of volume $\sim 1$.  The worst case scenario for such cubes is finite by our definition of the wavelet coefficients of $f$. Therefore, $f\in BMO^{-1}$.

\end{proof}

\section*{Acknowledgments}
The research of both authors was partially supported by the NSERC grant 261356-13 (Canada). That of Z.B. was also partially supported by the NSERC grant 251124-12. {We thank Dr.~Tong-Keun Chang for finding an error in a previous proof of Lemma \ref{lemma.profileslicing}. }

Zachary Bradshaw,  Department of Mathematics, University of British
Columbia, Vancouver, BC V6T 1Z2, Canada;
e-mail: zbradshaw@math.ubc.ca

\medskip

Tai-Peng Tsai, Department of Mathematics, University of British
Columbia, Vancouver, BC V6T 1Z2, Canada;
e-mail: ttsai@math.ubc.ca


\begin{thebibliography}{XX}
\bibitem{Barraza} Barraza, O., Self-similar solutions in weak $L^p$-spaces of the Navier-Stokes equations. Rev. Mat. Iberoamericana 12 (1996), 411-439.
\bibitem{BCD} Bahouri, H.. Chemin, J.-Y., Danchin, R., Fourier analysis and nonlinear partial differential equations. Grundlehren der Mathematischen Wissenschaften [Fundamental Principles of Mathematical Sciences], 343. Springer, Heidelberg, 2011. 
\bibitem{BT1} Bradshaw, Z. and Tsai, T.-P.,  Forward discretely self-similar solutions of the Navier-Stokes equations II, Ann. Henri Poincar\'e
(2016). doi:10.1007/s00023-016-0519-0

\bibitem{BT2} Bradshaw, Z. and Tsai, T.-P.,  
Rotationally corrected scaling invariant solutions to the Navier-Stokes equations, arxiv:1610.05680

\bibitem{CKN} Caffarelli, L., Kohn, R. and Nirenberg, L., Partial regularity of suitable weak solutions of the Navier-Stokes equations. Comm. Pure Appl. Math. 35 (1982), no. 6, 771-831.
\bibitem{Cannone-handbook} Cannone, M., Harmonic analysis tools for solving the incompressible Navier-Stokes equations. Handbook of mathematical fluid dynamics. Vol. III, 161-244, North-Holland, Amsterdam, 2004. 
\bibitem{CP} Cannone, M. and Planchon, F., Self-similar solutions for Navier-Stokes equations in $\R^3$. Comm. Partial Differential Equations 21 (1996), no. 1-2, 179-193. 


\bibitem{Chae-Wolf} Chae, D., and Wolf, J., 
    Existence of discretely self-similar solutions to the Navier-Stokes equations for initial value in $L^2_{\mathrm{loc}}(\R^3)$. arXiv:1610.01386

    
\bibitem{GS06} Galdi, G. P. and Silvestre, A. L., Existence of time-periodic solutions to the Navier-Stokes equations around a moving body. Pacific J. Math. 223 (2006), no. 2, 251-267.
\bibitem{GiMi} Giga, Y. and Miyakawa, T., Navier-Stokes flows in $\R^3$ with measures as initial vorticity and the Morrey spaces, Comm. Partial Differential Equations 14 (1989), 577-618.
%
%
%
\bibitem{JiaSverak} Jia, H. and \v Sver\'ak, V., Local-in-space estimates near initial time for weak solutions of the Navier-Stokes equations and forward self-similar solutions. Invent. Math. 196 (2014), no. 1, 233-265.
\bibitem{Kato} Kato, T., Strong solutions of the Navier-Stokes equation in Morrey spaces. Bol. Soc. Brasil. Mat. (N.S.) 22 (1992), no. 2, 127-155.
\bibitem{KiSe} Kikuchi, N. and Seregin, G., Weak solutions to the Cauchy problem for the Navier-Stokes equations satisfying the local energy inequality. Nonlinear equations and spectral theory, 141-164, Amer. Math. Soc. Transl. Ser. 2, 220, Amer. Math. Soc., Providence, RI, 2007.
\bibitem{Koch-Tataru}
Koch, H. and Tataru, D., Well-posedness for the Navier-Stokes equations.
Adv. Math. 157 (1), 22--35 (2001)
\bibitem{KT-SSHS} Korobkov, M. and Tsai, T.-P., Forward self-similar solutions of the Navier-Stokes equations in the half space, Analysis and PDE 9-8 (2016), 1811--1827. DOI 10.2140/apde.2016.9.1811



\bibitem{LR} Lemari\'e-Rieusset, P. G., \emph{Recent developments in the Navier-Stokes problem.} Chapman Hall/CRC Research Notes in Mathematics, 431. Chapman Hall/CRC, Boca Raton, FL, 2002.
\bibitem{LR2} Lemari\'e-Rieusset, P. G.,  \emph{The Navier-Stokes problem in the 21st century}. CRC Press, Boca Raton, FL, 2016.
\bibitem{leray} Leray, J., Sur le mouvement d'un liquide visqueux emplissant l'espace. (French) Acta Math. 63 (1934), no. 1, 193-248. 
\bibitem{Meyer} Meyer, Y., Wavelets and Operators, Cambridge Studies in Advanced Mathematics, Vol. 37, Cambridge Univ. Press, Cambridge (1993).
\bibitem{NRS} Ne\v cas, J., {R\accent23 u\v {z}i\v {c}ka}, M., and {\v Sver\'ak}, V., On Leray's self-similar solutions of the Navier-Stokes
  equations, Acta Math. 176 (1996), 283--294.
\bibitem{Stein} Stein, E. and Weiss, G., Introduction to Fourier analysis on Euclidean spaces. Princeton Mathematical Series, No. 32. Princeton University Press, Princeton, N.J., 1971.
\bibitem{Temam} Temam, R., \emph{Navier-Stokes equations. Theory and numerical analysis.} Reprint of the 1984 edition. AMS Chelsea Publishing, Providence, RI, 2001.
\bibitem{Tsai-ARMA} 
 Tsai, T.-P., On Leray's self-similar solutions of the
  Navier-Stokes equations satisfying local energy estimates, Archive
  for Rational Mechanics and Analysis 143 (1998), 29--51.
\bibitem{Tsai-DSSI}Tsai, T.-P., Forward discretely self-similar solutions of the Navier-Stokes equations. Comm. Math. Phys. 328 (2014), no. 1, 29-44.
\end{thebibliography}
\end{document}